\providecommand{\U}[1]{\protect\rule{.1in}{.1in}}
\theoremstyle{definition}
\newtheorem{theorem}{Theorem}[section]
\newtheorem{prop}{Proposition}[section]
\newtheorem*{theorem*}{Theorem}
\newtheorem{cor}{Corollary}[section]
\numberwithin{equation}{section}
\DeclareMathAlphabet{\mathpzc}{OT1}{pzc}{m}{it}
\newcommand{\nbd}{neighborhood \,}
\newcommand{\li}{Lipschitz}
\newcommand{\rr}{\mathbb{R}}
\newcommand{\nn}{\mathbb{N}}
\newcommand{\uu}{\mathcal{U}}
\newcommand{\vv}{\mathcal{V}}
\newcommand{\ww}{\mathcal{W}}
\newcommand{\w}{\mathbbmss{w}}
\newcommand{\x}{\mathbbmss{x}}
\newcommand{\xx}{\mathcal{X}}
\newcommand{\ab}{\mathcal{A}}
\newcommand{\id}{\mathrm{Id}}
\newcommand{\pr}{\mathrm{Pr}}
\newcommand{\f}{Fr\'{e}chet }
\newcommand{\mc}{$MC^k$}
\newcommand{\bl}[1] {\mathbf {#1}}
\DeclareMathOperator{\dd}{D}
\DeclareMathOperator{\codim}{codim}
\DeclareMathOperator{\Ind}{Ind}
\DeclareMathOperator{\Img}{Img}
\DeclareMathOperator{\Aut}{Aut}
\newcommand\opn{\ensuremath{\mathrel{\mathpalette\opncls\circ}}}
\newcommand{\opncls}[2]{
	\ooalign{$#1\subseteq$\cr
		\hidewidth\raisefix{#1}\hbox{$#1{\stylefix{#1}#2}\mkern2mu$}\cr}}
\def\raisefix#1{
	\ifx#1\displaystyle
	\raise.39ex
	\else
	\ifx#1\textstyle
	\raise.39ex
	\else
	\ifx#1\scriptstyle
	\raise.275ex
	\else
	\raise.150ex
	\fi
	\fi
	\fi
}
\def\stylefix#1{
	\ifx#1\displaystyle
	\scriptstyle
	\else
	\ifx#1\textstyle
	\scriptstyle
	\else
	\ifx#1\scriptstyle
	\scriptscriptstyle
	\else
	\scriptscriptstyle
	\fi
	\fi
	\fi
}
\DeclareFontFamily{U}{mathx}{\hyphenchar\font45}
\DeclareFontShape{U}{mathx}{m}{n}{
	<5> <6> <7> <8> <9> <10>
	<10.95> <12> <14.4> <17.28> <20.74> <24.88>
	mathx10
}{}
\newcommand{\fr}{Fr\'{e}chet }
\newcommand{\Z}{\mathbb{Z}}
\newcommand{\set}[1]{\left\{#1\right\}}
\newcommand{\snorm}[2][]{\left\lVert#2\right\rVert_{#1}}
\begin{document}

\title{Some applications of transversality for infinite dimensional manifolds}

\author{Kaveh Eftekharinasab}
\address{Topology lab.  Institute of Mathematics of National Academy of Sciences of Ukraine, Tereshchenkivska st. 3, Kyiv, 01601 Ukraine}

\email{kaveh@imath.kiev.ua}


\subjclass[2020]{57N75,  
	58B15,  
	47H11. 
}


\keywords{Transversality, degree of nonlinear Fredholm mappings, \f manifolds }

\begin{abstract}
   We present some transversality results for a category of  \f manifolds, the so-called \mc-\f manifolds. In this context,
   we apply the obtained transversality results to construct the degree of nonlinear Fredholm mappings
   by virtue of which we prove a rank theorem, an invariance of domain theorem and a Bursuk-Ulam type theorem. 
\end{abstract}

\maketitle
This paper is devoted to the development of transversality and its applications to degree theory of nonlinear Fredholm mappings  
for non-Banachable \fr manifolds. The elaboration is mostly, but not entirely, routine; we shall discuss the related issues.

In attempting to develop transversality  to \fr manifolds we face the following drawbacks which are related to
lack of a suitable topology on a space of continuous linear maps:
\begin{enumerate} \label{isu}
	\item[(1)] In general, the set of isomorphisms between \f spaces is not open in the space of continuous linear mappings.
	\item[(2)]  In general, the set of Fredholm operators between \f spaces is not open in the space of continuous linear mappings.
\end{enumerate} 
Also, a key point in the proof of an infinite dimensional version of Sard's theorem is that a Fredholm mapping $ \varphi $ near origin has a local representation of the form $ \varphi (u,v) = (u, \eta(u,v)) $ for some smooth mapping $ \eta $; 
indeed, this is a consequence of an inverse function theorem. 
      
To obtain a version of Sard's theorem for  \fr manifolds, based on the ideas of M\"{u}ller~\cite{m}, it was proposed by the author (\cite{k1}) to consider Fredholm operators which are
\li \,on their domains. There is an appropriate metrizable topology on a space of \li\, linear mappings so that
if we employ this space instead of a space of continuous linear mappings, the mentioned openness issues 
and the problem of stability of Fredholm mappings under small perturbation can be resolved. Furthermore, 
for mappings belong to a class of differentiability, bounded or \mc-differentiability which is introduced in~\cite{m}, 
a suitable version of an inverse function theorem is available,~\cite[Theorem 4.7]{m}.  

An example of \li-Fredholm mapping of class \mc \,can be found in~\cite{j}, where the Sard's theorem~\cite[Theorem 4.3]{k1} is applied to classify all the holomorphic functions locally definable; this gives the additional motivation to study further applications of Sard's theorem.

In this paper, first we improve the transversality theorem~\cite[Theorem 4.2]{k2} by considering all mappings of class \mc, then use it to prove the parametric transversality theorem. Then, for  \li-Fredholm mappings of class \mc\,  we apply the transversality theorem 
to construct the  degree (due to Cacciappoli, Shvarts and Smale), which is defined as 
the group of non-oriented cobordism class of $ \varphi^{-1}(q) $ for some regular value $ q $.

We then prove a rank theorem for  \li-Fredholm mappings of class \mc\,, and use it  to prove an invariance of domain theorem and a Fredholm alternative theorem. Also, using the parametric transversality theorem we obtain a  Bursuk-Ulam type theorem.

\section{Bounded Fr\'{e}chet manifolds}
In this section,  we shall briefly recall the basics of \mc-\fr manifolds  for the convenience of readers, which also allows us to  establish our notations for the rest of the paper. For more studies, we refer to~\cite{k1,k2}.
 
 Throughout the paper we assume that  $E, F$ are Fr\'{e}chet spaces and $CL(E,F)$ is the space of all continuous linear mappings from $E$ to $F$ topologized by the compact-open topology. If $ T $ is a topological space by $ U \opn T $ we mean $ U $
 is open in $ T $.
 
 Let $\varphi: U \opn E \to F$  be a continuous map. If the directional (G\^{a}teaux) derivatives
$$\dd \varphi(x)h = \lim_{ t \to 0} \dfrac{\varphi(x+th)-\varphi(x)}{t}$$
exist for all $x \in U$ and all $ h \in E $, and  the induced map  $\dd \varphi(x) : U \rightarrow CL(E,F)$ is continuous for all
$x \in U$, then  we say that $ \varphi $ is a Keller's differentiable map of class  $C^1_c$. 
The higher directional derivatives and $C^k_c$-mappings, $k\geq2$, are defined in the obvious inductive fashion.

To define bounded or \mc-differentiability, we endow a Fr\'{e}chet space $ F$ with a translation
invariant metric $\varrho$ defining its topology, and then introduce the metric concepts which strongly depend on
the choice of $\varrho$.  We consider only metrics of the following form
$$
\varrho (x,y) = \sup_{n \in \nn} \dfrac{1}{2^n} \dfrac{\snorm[F,n] {x-y}}{1+\snorm[F,n] {x-y}},
$$
where $ \snorm[F,n]{\cdot} $ is a collection of seminorms generating the  topology of $F$.

Let $\sigma$ be a metric that defines the topology of a  Fr\'{e}chet space $ E $. Let $\mathbb{L}_{\sigma,\varrho}(E,F)$ be the set of all 
linear mappings $ L: E \rightarrow F $ which are (globally) Lipschitz continuous as mappings between metric spaces $E$ and $F$, that is 
\begin{equation*}
\mathpzc{Lip} (L )\, \coloneq \displaystyle \sup_{x \in E\setminus\{0_F\}} \dfrac{\varrho (L(x),0_F)}{\sigma( x,0_F)} < \infty,
\end{equation*}
where $\mathpzc{Lip}(L)$ is the (minimal) Lipschitz constant of $L$.

The translation invariant metric 
\begin{equation} \label{metric}  
\mathbbm{d}_{\sigma,\varrho}: \mathbb{L}_{\sigma,\varrho}(E,F) \times \mathbb{L}_{\sigma,\varrho}(E,F) \longrightarrow [0,\infty) , \,\,
(L,H) \mapsto \mathpzc{Lip}(L-H)_{\sigma,\varrho} \,,
\end{equation}
on $\mathbb{L}_{\sigma,\varrho}(E,F)$ turns it  into an Abelian topological group. We always topologize the space $\mathbb{L}_{\sigma,\varrho}(E,F)$ by the metric~\eqref{metric}.

Let $ \varphi:U \opn E \rightarrow F $ be
a continuous map. If $\varphi$ is Keller's differentiable, $ \dd \varphi(x) \in \mathbb{L}_{\sigma,\varrho}(E,F) $ for all $ x \in U $ and the induced map 
$ \dd \varphi(x) : U \rightarrow \mathbb{L}_{\sigma,\varrho}(E,F)$ is continuous, then $ \varphi $ is called bounded differentiable 
or $ MC^{1} $ and we write  $\varphi^{(1)} = \varphi' $. We define  for $k>1 $  mappings of class $ MC^k$, recursively. 

An $MC^k$-Fr\'{e}chet manifold is a Hausdorff second countable topological space modeled on a Fr\'{e}chet space with an atlas of coordinate 
charts  such that the coordinate transition functions are all
$ MC^{k} $-mappings. We define $MC^k$-mappings between Fr\'{e}chet manifolds as usual.
Henceforth, we assume that $ M $ and $ N $ are connected \mc-\f manifolds modeled on \f spaces 
$ (F, \varrho) $  and $ (E,\sigma) $, respectively. 

 A mapping $ \varphi \in \mathbb{L}_{\sigma,\varrho}(E,F)$ 
is called Lipschitz-Fredholm operator if its kernel has finite dimension and its image  has finite co-dimension. 
The index of $ \varphi $ is defined by 
$$
\Ind \varphi = \dim \ker \varphi - \codim \Img \varphi.
$$
We denote by $ \mathcal{LF}(E,F) $  the set of all Lipschitz-Fredholm operators, and by $ \mathcal{LF}_l(E,F) $ the subset of $ \mathcal{LF}(E,F) $  
consisting of those operators  of index $l$.

An \mc-Lipschitz-Fredholm mapping $ \varphi:M \rightarrow N ,\, k\geq1 $, is a mapping such that for each $ x \in M $
the derivative $ \dd \varphi(x): T_xM  \longrightarrow T_{f(x)}N$ is a Lipschitz-Fredholm operator. 
The index of $ \varphi$, denoted by $\Ind{\varphi}$, is defined to be the index of $ \dd \varphi (x) $ for some $ x $ which does not depend on the choice of $ x $, see~\cite[Definition 3.2 ]{k1}.

Let $\varphi: M \rightarrow N$ $(k\geq1)$ be an \mc-mapping. We denote by $T_x\varphi: T_xM \rightarrow T_{\varphi(x)}N$
the tangent map of $f$ at $x \in M$ from the tangent space $T_xM$ to the tangent space $T_{\varphi(x)}N$.
We say that $\varphi$ is an immersion (resp. submersion)
provided $T_x\varphi$ is injective (resp. surjective) and the range $\Img(T_x\varphi)$ (resp. the kernel $\ker(T_x\varphi) $)
splits in $T_{\varphi(x)}N$ (resp. $T_xM$) for any $x \in M$. 
An injective immersion $f: M \rightarrow N$ which gives an isomorphism onto a submanifold
of $N$ is called an embedding. A point $ x \in M $ is called a regular point  if 
$ \operatorname{D}f(x): T_xM  \longrightarrow T_{f(x)}N $ is surjective. The corresponding
value $f(x)$ is a regular value. Points and values other than regular are called critical points and values, respectively.

	Let $\varphi:M \to N$ be an \mc-mapping, $ k \geq 1 $.
	We say that $\varphi$ is transversal to a submanifold $S \subseteq N$ and write $\varphi \pitchfork S$ if
	either $ \varphi^{-1}(S) = \emptyset $, or if for each $ x \in \varphi^{-1}(S) $ 
	\begin{enumerate}
		\item $(T_x \varphi)(T_x M) + T_{\varphi(x)}S = T_{\varphi(x)}N$, and
		\item $ (T_x \varphi)^{-1}(T_{\varphi(x)}S) $ splits in $ T_xM $.
	\end{enumerate}

In terms of charts, $ \varphi \pitchfork S $ when $ x \in \varphi^{-1}(S) $
there exist charts $ (\phi,\uu) $ around $ x $ and $ (\psi,\vv) $ around $ \varphi (x) $ such that
$$
\psi: \vv \to \vv_1 \times \vv_2 
$$
is an \mc-isomorphism on a product, with 
$$
\psi(\varphi(x)) = (0_E,0_E)\, \quad \varphi (S \cap \vv) =\vv_1 \times \set{0_E}.
$$
Then the composite mapping
$$
\uu \xrightarrow {\varphi} \vv \xrightarrow{\psi}\vv_1 \times \vv_2 \xrightarrow{\pr_{V_2}} \vv_2.
$$
is an \mc-submersion, where $ \pr_{V_2} $ is the projection onto $ \vv_2 $.

\section{Transversality theorems}
We generalize \cite[Theorem 4.2]{k2} and \cite[Corollary 4.1]{k2} for not necessarily  \li-Fredholm  mappings and finite dimensional submanifolds. We shall need the following version of the inverse function theorem
for \mc-mappings.
\begin{theorem}\cite[Theorem 4.7]{m}  \label{invr}
	 Let $\uu \opn  E$, $u_0 \in \uu$ and
	$ \varphi : \uu \rightarrow E $  an $ MC^k$-mapping, $ k \geq 1 $.
	If $\varphi'(u_0) \in \Aut{(E)}  $, then there exists 
	$ \vv \opn \uu $ of $ u_0 $ such that $ \varphi(\vv) $ is open in $ E $ and $ \varphi \vert_\vv : \vv \to \varphi(\vv) $ 
	is an $ MC^k$- diffeomorphism.
\end{theorem}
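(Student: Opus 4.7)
The plan is to follow the classical Banach-space proof of the inverse function theorem, exploiting the fact that the Lipschitz metric on $\mathbb{L}_{\sigma,\varrho}(E,E)$ provides the quantitative control that the compact-open topology on $CL(E,E)$ lacks. After standard reductions---translating so that $u_0 = 0_E$ and $\varphi(u_0) = 0_E$, and pre-composing with $\varphi'(0_E)^{-1}$---one may assume $\varphi'(0_E) = \id_E$; the composition is legitimate because $\varphi'(0_E) \in \Aut(E) \cap \mathbb{L}_{\sigma,\varrho}(E,E)$, whose inverse in this setting is again Lipschitz.

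Next I would introduce $\psi(x) = x - \varphi(x)$, so that $\psi(0_E) = 0_E$ and $\psi'(0_E) = 0$. By $MC^1$-continuity of $\psi'$ in the metric $\mathbbm{d}_{\sigma,\varrho}$, there is a convex open neighborhood $\vv \opn \uu$ of $0_E$ on which $\mathpzc{Lip}(\psi'(x)) \leq 1/2$. A mean value inequality for $MC^1$ mappings---obtained by integrating $\psi'$ along line segments inside $\vv$---then yields $\varrho(\psi(x_1),\psi(x_2)) \leq \tfrac12 \, \sigma(x_1,x_2)$ for all $x_1,x_2 \in \vv$. Consequently, for every $y$ sufficiently close to $0_E$ the map $G_y(x) = y + \psi(x)$ carries a small closed ball inside $\vv$ into itself and is a contraction; the Banach fixed point theorem, applicable since $(E,\varrho)$ is complete, produces a unique $x \in \vv$ with $\varphi(x) = y$. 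This simultaneously shows that $\varphi(\vv)$ is open and that $\varphi\vert_\vv$ is a bijection onto it.

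To obtain $MC^k$-regularity of the local inverse, I would first argue that every $\varphi'(x) = \id_E - \psi'(x)$ with $x \in \vv$ lies in $\Aut(E)$: since $\mathpzc{Lip}(\psi'(x)) < 1$, the Neumann series $\sum_{n \geq 0} \psi'(x)^n$ converges in the complete Abelian topological group $(\mathbb{L}_{\sigma,\varrho}(E,E), \mathbbm{d}_{\sigma,\varrho})$ and furnishes a Lipschitz inverse of $\varphi'(x)$. The standard algebraic manipulation then identifies $(\varphi^{-1})'(y) = \varphi'(\varphi^{-1}(y))^{-1}$, and continuity of the inversion map $L \mapsto L^{-1}$ in the Lipschitz metric, combined with the chain rule, upgrades this to $MC^k$-regularity by induction on $k$. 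The principal obstacle is exactly issue~(1) from the introduction: openness of the invertible operators fails in the compact-open topology but is restored in $\mathbb{L}_{\sigma,\varrho}(E,E)$ via the Neumann estimate $\mathpzc{Lip}((\id_E - T)^{-1}) \leq (1-\mathpzc{Lip}(T))^{-1}$. Verifying this convergence, together with a mean value inequality phrased in terms of Lipschitz constants of derivatives rather than operator norms, constitutes the technical core; once available, the remainder of the argument parallels the Banach case.
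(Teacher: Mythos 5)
This theorem is not proved in the paper at all: it is imported verbatim from M\"{u}ller \cite{m}, so there is no in-paper argument to compare against. Your sketch is essentially the known proof from that source (and the classical Banach-space scheme): normalize to $\varphi'(0_E)=\id_E$, apply the Banach fixed point theorem to $x \mapsto y + x - \varphi(x)$ on a ball where the perturbation's derivative has $\mathpzc{Lip} \leq 1/2$ (using a mean value estimate valid for the bounded metrics $\varrho,\sigma$), and invert $\varphi'(x)$ by a Neumann series in the complete metric group $(\mathbb{L}_{\sigma,\varrho}(E,E),\mathbbm{d}_{\sigma,\varrho})$, which is exactly what replaces the failure of openness of isomorphisms in the compact-open topology. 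Two caveats on your write-up: the assertion that $\varphi'(u_0)^{-1}$ is again Lipschitz is not a consequence of the open mapping theorem (a continuous linear inverse need not be Lipschitz for the fixed metrics) but of the convention that $\Aut(E)$ in this framework means automorphisms in the bounded/Lipschitz category, so it is part of the hypothesis rather than something to be derived; and the final induction to $MC^k$-regularity of the local inverse needs more than continuity of $L \mapsto L^{-1}$ in $\mathbbm{d}_{\sigma,\varrho}$ --- since $\mathbb{L}_{\sigma,\varrho}(E,E)$ is only a metrizable topological group, one must handle the bounded differentiability of the derivative map $y \mapsto (\varphi^{-1})'(y)$ within M\"{u}ller's recursive definition of the higher $MC^k$ classes, which is where the remaining technical work lies.
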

\begin{prop}\label{p1}
	Let $ \varphi : M \to N $ be an \mc-mapping, $ S \subset N $ an \mc-submanifold and $ x \in \varphi^{-1}(S) $.
	Then $ \varphi \pitchfork S $ if and only if there are charts 
	$ (\uu, \phi) $ around $ x $ with $ \phi(x) = 0_E $ and $ (\vv,\psi) $ around $ y=\varphi (x) $ in $ S $ with
	$ \psi(y) = 0_F $ such that the following hold:
	\begin{enumerate}
		\item[(1)] There are subspaces $ \bf E_1 $ and $ \bf E_2 $ of $ E $, and $ \bf F_1 $ and $ \bf F_2 $ of $ F $
		such that $ E =  \bf E_1 \oplus \bf E_2 $ and $ F =\bf F_1 \oplus F_2 $. Moreover, $\psi( S \cap \vv) = F_1$
		and  
		\begin{align*}
		\phi (\uu) =  E_1 +  E_2  \subseteq {\bf E_1} \oplus {\bf E_2} \\ \nonumber
		 \psi (\vv) =  F_1 +  F_2  \subseteq {\bf F_1} \oplus {\bf F_2}, \nonumber
		\end{align*}
		where $ 0_E \in E_i \opn {\bf E_i} $ and $ 0_F \in F_i \opn {\bf F_i} $, $ i=1,2 $.
		\item[(2)] In the charts the local representative of $ \varphi $ has the form
		\begin{equation}\label{lp}
		\varphi_{\phi\psi} = \overline{\varphi} + \hat{\varphi} \circ \pr_{E_2},
		\end{equation} 
		where $ \overline{\varphi} : E_1 + E_2 \to F_1 $ is an \mc-mapping, $ \hat{\varphi} $
		is an \mc-isomorphism of $ \bf E_2 $ onto $ \bf F_2 $ and $ \pr_{E_2}: E \to {\bf E_2} $
		is the projection.
	\end{enumerate}
\end{prop}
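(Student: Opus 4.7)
The plan is to prove the equivalence in two steps, with the forward direction carrying all the content and the reverse direction being a direct verification.

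For the reverse implication, assuming the representative has the form~\eqref{lp}, I would compute the derivative of $\varphi_{\phi\psi}$ at $0_E$, obtaining the map $(v_1, v_2) \mapsto \dd\overline{\varphi}(0_E)(v_1, v_2) + \hat{\varphi}(v_2)$. Because $\hat{\varphi}$ is an isomorphism onto $\mathbf{F}_2$ and $\dd\overline{\varphi}(0_E)$ is $\mathbf{F}_1$-valued, the image of this derivative plus $\mathbf{F}_1$ equals $\mathbf{F}_1 \oplus \mathbf{F}_2 = F$, and its preimage of $\mathbf{F}_1$ equals $\mathbf{E}_1 \oplus \set{0_E}$, which manifestly splits $E$. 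Transferring through the charts yields the two defining conditions of $\varphi \tn S$ at $x$.

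For the forward implication, I would begin from a submanifold chart $(\vv,\psi)$ adapted to $S$, so that $\psi(y) = 0_F$, $F = \mathbf{F}_1 \oplus \mathbf{F}_2$ is a topological splitting, and $\psi(S \cap \vv) \subseteq \mathbf{F}_1$; and any initial chart $(\uu_0,\phi_0)$ around $x$ with $\phi_0(x) = 0_E$. Set $\tilde{\varphi} := \psi \circ \varphi \circ \phi_0^{-1}$ and $h := \tilde{\varphi}'(0_E)$. Transversality supplies the splitting $E = \mathbf{E}_1 \oplus \mathbf{E}_2$ with $\mathbf{E}_1 = h^{-1}(\mathbf{F}_1)$. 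The composite $\pr_{\mathbf{F}_2} \circ h$ has kernel exactly $\mathbf{E}_1$ and is surjective onto $\mathbf{F}_2$ (from $h(E) + \mathbf{F}_1 = F$), so its restriction $\hat{h} := \pr_{\mathbf{F}_2} \circ h\vert_{\mathbf{E}_2}\colon \mathbf{E}_2 \to \mathbf{F}_2$ is a continuous linear bijection and, by the open mapping theorem in the \fr category, a topological isomorphism. I would then introduce the change of chart
\[
\Psi(u_1, u_2) := \bigl(u_1,\; \hat{h}^{-1}(\pr_{\mathbf{F}_2}(\tilde{\varphi}(u_1, u_2)))\bigr)
\]
on a neighborhood of $0_E$ in $\phi_0(\uu_0)$. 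A direct computation gives $\Psi'(0_E) = \id_E \in \Aut(E)$, so Theorem~\ref{invr} produces an $MC^k$-diffeomorphism onto an open set. Taking $\phi := \Psi \circ \phi_0$ and rewriting $\psi \circ \varphi \circ \phi^{-1} = \tilde{\varphi} \circ \Psi^{-1}$, the $\mathbf{F}_2$-component of the new representative equals $\hat{h} \circ \pr_{\mathbf{E}_2}$ by the very definition of $\Psi$, while the $\mathbf{F}_1$-component is an \mc-map $\overline{\varphi} := \pr_{\mathbf{F}_1} \circ \tilde{\varphi} \circ \Psi^{-1}$. After shrinking $\uu$ and $\vv$ so that $\phi(\uu)$ and $\psi(\vv)$ sit inside products of open neighborhoods of the respective origins in each summand, the representative has exactly the form~\eqref{lp} with $\hat{\varphi} := \hat{h}$.

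The main technical obstacle is the \fr-specific step of upgrading $\hat{h}$ from a continuous linear bijection to a genuine topological isomorphism; this I handle through the open mapping theorem, valid in the \fr category. A secondary concern is the preservation of the \mc-class under the composition defining $\Psi$: since $\pr_{\mathbf{F}_2}$ and $\hat{h}^{-1}$ are continuous linear maps between \fr spaces, for the invariant metrics associated with the ambient manifold charts they lie in $\mathbb{L}_{\sigma,\varrho}$, so composing with the \mc-map $\tilde{\varphi}$ preserves the class, and Theorem~\ref{invr} is then directly applicable.
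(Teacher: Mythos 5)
Your overall route coincides with the paper's: the reverse direction is the same derivative computation (image plus $\mathbf{F}_1$ is all of $F$, preimage of $\mathbf{F}_1$ is $\mathbf{E}_1$, which splits), and the forward direction builds the same change of chart --- your $\Psi(u_1,u_2)=\bigl(u_1,\hat{h}^{-1}(\pr_{\mathbf{F}_2}(\tilde{\varphi}(u_1,u_2)))\bigr)$ is exactly the paper's map $\tau=\pr_{E_1}+\eta^{-1}\circ\pr_{F_2}\circ\varphi_{\x\w}$, with the same observation $\Psi'(0_E)=\id_E$ and the same appeal to Theorem~\ref{invr} to turn it into an \mc-chart, after which the $\mathbf{F}_2$-component of the new representative is $\hat{h}\circ\pr_{\mathbf{E}_2}$ by construction.

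There is, however, one genuine gap, precisely at the step you single out as the ``main technical obstacle.'' You upgrade the continuous linear bijection $\hat{h}=\pr_{\mathbf{F}_2}\circ h\vert_{\mathbf{E}_2}$ to an isomorphism via the open mapping theorem, and then assert that $\hat{h}^{-1}$ (and $\pr_{\mathbf{F}_2}$) lie in $\mathbb{L}_{\sigma,\varrho}$ ``since they are continuous linear maps between \fr spaces.'' That last claim is false in general, and it is exactly the failure that motivates this whole framework: $\mathbb{L}_{\sigma,\varrho}(E,F)$ is a proper subclass of $CL(E,F)$, and the open mapping theorem only yields continuity of $\hat{h}^{-1}$, not Lipschitz continuity with respect to the fixed translation-invariant metrics (compare the openness issues (1)--(2) listed in the introduction, which would be vacuous if continuity implied membership in $\mathbb{L}_{\sigma,\varrho}$). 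This matters twice: $\hat{\varphi}$ is required in the statement to be an \emph{\mc-isomorphism} of $\mathbf{E}_2$ onto $\mathbf{F}_2$, and your $\Psi$ must be of class \mc\ for Theorem~\ref{invr} to apply at all, which needs $\hat{h}^{-1}\circ\pr_{\mathbf{F}_2}\in\mathbb{L}_{\sigma,\varrho}$. The paper does not derive this from the open mapping theorem; it reads the \mc-isomorphism $\eta$ off from the transversality hypothesis itself, via the chart formulation of $\varphi\tn S$ in which $\pr_{\vv_2}\circ\psi\circ\varphi$ is an \mc-submersion (so the splitting of $(T_x\varphi)^{-1}(T_yS)$ is an \mc-splitting and the induced map on the complement is an \mc-isomorphism by that very hypothesis). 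To repair your argument you should either invoke transversality in this stronger, category-adapted sense, or supply a separate argument that $\hat{h}^{-1}$ is Lipschitz for the metrics in question; the Banach-style ``bijective $+$ open mapping theorem $\Rightarrow$ isomorphism in the category'' shortcut is not available here.
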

\begin{proof}
{\bf Sufficiency:}	 Let $ (\uu, \phi) $ and $ (\vv, \psi) $ be charts that satisfy the assumptions
we will prove $ \varphi \pitchfork S. $

In the charts, by using the identifications $ T_xM \simeq E, \, T_y \simeq F $, the tangent map
$ T_x\varphi: T_xM \to T_y N $ has the representation 
$$
T_x\varphi = \varphi'_{\phi \psi}(0_E) : E \to F.
$$ 
Also, we have the identification $ T_y S \simeq {\bf F_1} $.

Let $ \pr_{F_2} : F \to {\bf F_2}$ be the projection onto $ {\bf F_2} $.
Since $ \varphi'_{\phi\psi}(0_E) = (\overline{\varphi})' + \hat{\varphi} \circ \pr_{E_2}$ and
$ (\overline{\varphi})'(0) : E \to {\bf F_1} $, it follows that for all 
$ e \in E,\, e=e_1+e_2 \in {\bf E_1} \oplus { \bf E_2 } $
$$
\varphi'_{\phi\psi}(0_E)e = (\overline{\varphi})'(0_E)e + \hat{\varphi} \circ \pr_{E_2}.
$$
Thus, $ \pr_{F_2}  \circ \varphi'_{\phi\psi}(0_E)e = \pr_{F_2} \circ \hat{\varphi} \circ \, \pr_{F_2} (e)$
which means
\begin{equation}\label{key}
\pr_{F_2}  \circ \varphi'_{\phi\psi}(0_E) = \pr_{F_2} \circ \hat{\varphi} \circ \, \pr_{F_2}
\end{equation}
it is a surjective mapping of $ E $  onto $ {\bf F_2} $.

Moreover, we have 
\begin{align*}
	\ker ( \pr_{F_2} \circ \varphi'_{\phi\psi}(0_E)) &= \varphi'_{\phi\psi}(0_E)^{-1} (\pr_{F_2}(0_E))
	= \varphi'_{\phi\psi}(0_E)^{-1} (F_1) \\
	&= \set{e = e_1+e_2 \in {\bf E_1} \oplus {\bf E_2} \mid (\overline{\varphi})'(0_E)e + \hat{\varphi}(e_2) \in \mathbf{F}_1 } \\
		&= \set{e = e_1+e_2 \in {\bf E_1} \oplus {\bf E_2} \mid \hat{\varphi}(e_2) =0_F } \\
		&= \set{e = e_1+e_2 \in {\bf E_1} \oplus {\bf E_2} \mid e_2 =0_E } \\
		&=\mathbf{E}_1.
	\end{align*}
	Which is an \mc- splitting in $ E $ with a component $ E_2 $. From~\eqref{key} it follows that
	$$
	\pr_{F_2} \circ \varphi'_{\phi\psi}(0_E) = \hat{\varphi} : \pr_{E_2} \to \pr_{F_2}
	$$
	which is an \mc-isomorphism.
	
	{\bf Necessity:} Suppose $ \varphi \pitchfork S $. Since $ S $ is an \mc-submanifold of $ N $
	and $ y= \varphi(x) \in S$, there is a chart $(\ww, \w)  $ around $ y $
	having the submanifold property for $ S $ in $ N $: 
	\begin{align*}
		\w(\ww) = W_1+ W_2 \subset \mathbf{F_1} \oplus \mathbf{F_2} = F,\\
		\w(S \cap \ww) = W_1 \subset F, \quad \w (y) =0_E.
	\end{align*} 
Also, there is a chart $ (\xx,\x)  $ around $ x $ such that $ \x(x) =0_F,\, \varphi(\xx) \subset \ww$ and
$$
\varphi_{\x\w} : \x(\xx) \opn E \to \w(\ww) \opn F
$$
is of class \mc. It follows that $ \varphi_{\x\w}(0_E) \circ \pr_{F_2}: E \to \mathbf{F_2} $
is an \mc-submersion as $ \varphi \pitchfork S $. That is, $ \varphi_{\x\w} \circ \pr_{F_2} $
and $ \mathbf{E_1} \coloneq \varphi_{\x\w}'(0_E)^{-1}(\mathbf{F_1}) $ splits in $ E $ with the complement
$ \mathbf{E_2} $ such that
$$
\eta \coloneq \pr_{F_2} \circ \varphi_{\x\w}(0_E) \mid_{\mathbf{E_2}} : \mathbf{E_2} \to \mathbf{F_2}  
$$ 	
is an \mc-isomorphism. Set $ \tau \coloneq (\pr_{E_1} + \eta^{-1} \circ \pr_{F_2} \circ \varphi_{\x\w}) : \x(\xx) \to \w(\ww) $, then $ \tau $ is an \mc-mapping and $ \tau(0_E) =0_E $ and
 $\tau' (0_E)  = (\pr_{E_1} + \eta^{-1} \circ \pr_{F_2} \circ \varphi_{\x\w})'(0_E) = \pr_{E_1}+ \pr_{E_2} =\id_{E}$.
 Because, for all $ e=e_1+e_2 \in \mathbf{E_1}\oplus \mathbf{E_2} $ we have 
 $ (\varphi_{\x\w})'(0_E)e = (\varphi_{\x\w})'(0_E)e_1 + (\varphi_{\x\w})'(0_E)e_2 $. Whence,
 $ \pr_{F_2} \circ (\varphi_{\x\w})'(0_E)e = \pr_{F_2} \circ (\varphi_{\x\w})'(0_E)e_2 = \tau (e_2)$, hence,
 $$ \tau^{-1} \circ \pr_{F_2} \circ (\varphi_{\x\w})'(0_E)e = \tau^{-1}(\tau(e_2)) =\pr_{E_2} .$$  
 
 By the inverse mapping theorem \ref{invr}, $ \tau $ is a local \mc-diffeomorphism.
 
  Assume 
 $ 0_E \in \xx_1 \opn \x(\xx) $ is small enough. Let 
 $$
 \x_1 : \xx_1 \to 0_E \in \xx_2 \opn E
 $$
 be an \mc-diffeomorphism such that 
 \begin{equation}\label{e1}
 \tau \circ \x_1^{-1} = \id_F.
 \end{equation}
 Thus,
 \begin{equation}\label{e2}
 \pr_{F_2} \circ \tau \circ \varphi_{\x\w} = \eta \circ \pr_{E_2}.
 \end{equation}
 If, $ e= e_1+e_2 \in \x_1(\xx_1) $ and $ \x_1^{-1}(e) = \bar{e_1} + \bar{e_2} $, then by \eqref{e1} and \eqref{e2}
 we obtain 
 \begin{align*}
 	\tau \circ \x_1^{-1} (e) &= \tau(\bar{e_1} + \bar{e_2}) \\
 	                    &= \bar{e_1} + \eta^{-1} \circ \pr_{F_2} \circ \varphi_{\x\w} (\bar{e_1} + \bar{e_2})\\
 	                    &=e_1+e_2.
 \end{align*}
 Therefore, $ \bar{e_1} = e_1 $ and $ \tau^{-1} \circ \pr_{F_2} \circ \varphi_{\x\w} = e_2 $
 and so $$ \pr_{F_2} \circ \varphi_{\x\w}(\bar{e_1}+\bar{e_2}) = \tau(e_2) = \tau \circ \pr_{E_2} (e_1+e_2) .$$
 This means, $\pr_{E_2} \circ \varphi_{\x\w} \circ \x_1^{-1}(e) = \eta \circ \pr_{E_2}(e)$ for all $ e \in \x_1(\xx_1) $. Now, define
 \begin{align*}
 	&\phi \coloneq \x_1 \circ \x, \quad \uu \coloneq \x^{-1}(\xx), \\
 	&\psi \coloneq \w,  \quad \vv \coloneq \mathrm{small \,enough \,\nbd of}\, y \, \mathrm{in} \ww.  
 \end{align*}
 Then, $ (\phi, \uu) $ and $ (\psi, \vv) $ are the desired charts. Indeed, 
 \begin{align*}
 	\varphi_{\phi\psi} &= \w \circ \varphi \circ (\x_1 \circ \x) \\
 	               &= \w \circ \varphi \circ \x^{-1} \circ \x^{-1} = \varphi_{\x\w} \circ \x_1^{-1}. 
 \end{align*}
 Thus, 
 \begin{align*}
 	\varphi_{\phi\psi} &= \pr_{F_1} \circ \varphi_{\x\w} \circ \x_1^{-1} + 
 	\pr_{F_2} \circ \varphi_{\x\w} \circ \x_1^{-1}\\
 	&=  \overline{\varphi} + \hat{\varphi} \circ \pr_{E_2},
 \end{align*}
 if we set $ \hat{\varphi} \coloneq \eta $ and $ \overline{\varphi} \coloneq \pr_{E_1} \circ \varphi_{\x\w} \circ \x_1^{-1} $.
\end{proof}
\begin{theorem}[Transversality Theorem]\label{t1}
		Let $ \varphi : M \to N $ be an \mc-mapping, $k\geq1$, $ S \subset N $ an \mc-submanifold and $ \varphi \pitchfork S $. Then, $ \varphi^{-1}(S) $ is either empty of \mc-submanifold of $ M $
		with
		$$
		(T_x \varphi)^{-1}(T_yS) = T_x (\varphi^{-1}(S)), \, x \in \varphi^{-1}(S),\, y=\varphi(x).
		$$
		If $ S $ has finite co-dimension in $ N $, then $\codim (\varphi^{-1}(S)) = \codim S  $. Moreover, 
		if $\dim S = m <\infty$  and $\varphi$ is an \mc-\li-Fredholm mapping of index $ l $, 
		then $\dim \varphi^{-1}(S) = l+m$. 
\end{theorem}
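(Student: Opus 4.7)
The plan is to reduce everything to the local normal form supplied by Proposition \ref{p1} and then read off the claims chart-by-chart. Fix $x\in\varphi^{-1}(S)$ and set $y=\varphi(x)$. By Proposition \ref{p1}, choose charts $(\uu,\phi)$ around $x$ and $(\vv,\psi)$ around $y$ with $\phi(x)=0_E$, $\psi(y)=0_F$, $\psi(S\cap\vv)=F_1$, and $\varphi_{\phi\psi}=\overline{\varphi}+\hat\varphi\circ\pr_{E_2}$ where $\overline{\varphi}$ takes values in $F_1$ and $\hat\varphi:\mathbf{E_2}\to\mathbf{F_2}$ is an \mc-isomorphism. Since $\overline{\varphi}$ lands in $F_1$, we get $\pr_{F_2}\circ\varphi_{\phi\psi}=\hat\varphi\circ\pr_{E_2}$, so
\[
\phi(\varphi^{-1}(S)\cap\uu)=\{e\in\phi(\uu):\hat\varphi(\pr_{E_2}(e))=0_F\}=E_1,
\]
using invertibility of $\hat\varphi$. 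This is precisely the submanifold chart property for $\varphi^{-1}(S)$ modelled on $\mathbf{E_1}$, so $\varphi^{-1}(S)$ is an \mc-submanifold of $M$.

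Next I would compute the tangent space in these coordinates. Via $\phi$, $T_x(\varphi^{-1}(S))$ corresponds to $\mathbf{E_1}$. On the other hand, $\varphi_{\phi\psi}'(0_E)(e_1+e_2)=\overline{\varphi}'(0_E)(e_1+e_2)+\hat\varphi(e_2)$, and this lies in $\mathbf{F_1}=T_yS$ (in the charts) iff $\hat\varphi(e_2)=0_F$ iff $e_2=0_E$. Hence $(T_x\varphi)^{-1}(T_yS)$ also corresponds to $\mathbf{E_1}$, giving the asserted identity.

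For the codimension statement, when $\codim S<\infty$ the finite-dimensional complement $\mathbf{F_2}$ has $\dim\mathbf{F_2}=\codim S$; since $\hat\varphi:\mathbf{E_2}\to\mathbf{F_2}$ is an \mc-isomorphism, the complement $\mathbf{E_2}$ of the submanifold chart for $\varphi^{-1}(S)$ has the same finite dimension, so $\codim\varphi^{-1}(S)=\dim\mathbf{E_2}=\codim S$.

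Finally, for the Fredholm-dimension claim I would argue purely on tangent spaces: let $L:=T_x\varphi$, a Lipschitz-Fredholm operator of index $l$, and $V:=T_yS$ of dimension $m$. By transversality $L(T_xM)+V=T_yN$, so the composition $V\hookrightarrow T_yN\twoheadrightarrow\Coker L$ is surjective with kernel $V\cap\Img L$, yielding $\dim(V\cap\Img L)=m-\codim\Img L$. Since $L$ restricted to $L^{-1}(V)$ has kernel $\ker L$ and image $V\cap\Img L$,
\[
\dim T_x(\varphi^{-1}(S))=\dim L^{-1}(V)=\dim\ker L+(m-\codim\Img L)=l+m.
\]
As $\varphi^{-1}(S)$ is modelled on this finite-dimensional tangent space, $\dim\varphi^{-1}(S)=l+m$. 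The one step that requires genuine care is the first: extracting the exact local factorisation and verifying that the residual part $\overline\varphi$ really does land in the $F_1$-factor; once Proposition \ref{p1} is invoked correctly, the remaining steps are linear-algebraic bookkeeping.
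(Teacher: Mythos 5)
Your proposal is correct and follows essentially the same route as the paper: invoke Proposition \ref{p1} for the local form $\varphi_{\phi\psi}=\overline{\varphi}+\hat{\varphi}\circ\pr_{E_2}$, read off the submanifold chart $\phi(\varphi^{-1}(S)\cap\uu)=E_1$, identify $(T_x\varphi)^{-1}(T_yS)$ with $\mathbf{E_1}$, and get the codimension from $\dim\mathbf{F_2}$. The only difference is that you spell out the final dimension count $\dim\varphi^{-1}(S)=l+m$ via the linear-algebra argument on $L^{-1}(V)$, which the paper simply calls standard; your computation is correct.
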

\begin{proof}
	Let $ x \in \varphi^{-1}(S) $, then by Proposition~\ref{p1} there are chart $ (\phi,\uu) $ around $ x $ and $ (\psi,\vv) $ around $ y= \varphi(x) $ such that
	\begin{align} \label{e3}
	&	\phi (\uu) =  E_1 +  E_2  \subseteq {\bf E_1} \oplus {\bf E_2}, \nonumber\\ 
	&	\psi (\vv) =  F_1 +  F_2  \subseteq {\bf F_1} \oplus {\bf F_2}, \nonumber \\
	&	\varphi_{\phi\psi} = \overline{\varphi} + \hat{\varphi} \circ \pr_{E_2},
	\end{align}
		where $ \overline{\varphi} : E_1 + E_2 \to F_1 $ is an \mc-mapping, $ \hat{\varphi} $
	is an \mc-isomorphism of $ \bf E_2 $ onto $ \bf F_2 $ and $ \pr_{E_2}: E \to {\bf E_2} $
	is the projection.
	
	Let $ \hat{e} \in \varphi^{-1}(S) \cap \uu $, then $ \hat{f} = \varphi(\hat{e}) \in S \cap \vv $
	and $ \psi (\varphi(\hat{e})) \in F_1 \subset \mathbf{F_1}$. By  \eqref{e3},
	if $ \phi(\hat{e}) = e_1+e_2 \in E_1+E_2 $ we have 
	\begin{align*}
		\varphi_{\phi \psi}(\phi(\hat{e})) &= 	\varphi_{\phi \psi}(e_1+e_2) \\
		&= \overline{\varphi} (e_1+e_2) + \hat{\varphi}(e_2) \in F_1 \subset \mathbf{F_1}.
	\end{align*}
	It follows $ \hat{\varphi}(e_2)=0_E, e_2=0_E, $ since $ \hat{\varphi}(e_2) \in \mathbf{F_2} $ and
	$ \mathbf{F_1} \cap \mathbf{F_2} = \set{0_F} $.
	
	Thus, $\phi (\hat{e}) \in F_1$ for all $ \hat{e} \in \varphi^{-1}(S) \cap \uu$. Therefore,
	$ E_1 \subset \phi ( \varphi^{-1}(S) \cap \uu) $, since for each $ e_1 \in E_1 $ we have
	$$
		\varphi_{\phi \psi}(e_1) = \overline{\varphi}(e_1)+ \hat{\varphi} \circ \pr_{E_2}(e_1) = \overline{\varphi}(e_1) \in F_1.
	$$
	Hence, $ \psi \circ \varphi \circ \phi^{-1}(e_1) \in F_1  $ implies that 
	$ \varphi \circ \phi^{-1}(e_1) \in \psi^{-1}(F_1) =S \cap \vv $ and so
	$ \varphi \circ \phi^{-1}(e_1) $ which means $ \phi^{-1}(e_1) \in \varphi (S) \cap \vv$
	that yields $ e_1 \in \psi (\varphi^{-1}(S) \cap \vv) $.
	Therefore, for $x \in \varphi^{-1}(S) $ there is a chart $ (\phi,\uu) $ with $ \phi (\uu) = E_1+E_2
	\subset \mathbf{E_1} \oplus \mathbf{E_2} $ and $ \phi(x) =0_E , \, \phi (\varphi^{-1}(S) \cap \vv) =E_1,$
	which means $ \varphi^{-1}(S) $ is an \mc-submanifold in $ M $.
	
	In the charts, we have $ T_x \simeq E, \, T_y N \simeq F,\, T_x (\varphi^{-1}(S)) \simeq \mathbf{E_1} $ and
	$ T_yS \simeq \mathbf{F_1} $. From the proof of Proposition \ref{p1} we have
	$$
   \varphi_{\phi \psi}'(0_E)^{-1}(\mathbf{F_1}) =\mathbf{E_1}
	$$
	which yields $ (T_x \varphi)^{-1}(T_yS) = T_x (\varphi^{-1}(S)) $.
	
	If $ S $ has finite co-dimension then $ \mathbf{F_2} $ has finite dimension  and thus by Proposition \ref{p1},
	$$
	\codim (\varphi^{-1}(S)) = \codim \varphi^{-1}(S \cap \vv) = \dim (\mathbf{F_2})=\codim (S).
	$$
	The proof of the last statement is standard. 
\end{proof}
As an immediate consequence we have:
\begin{cor}\label{s1}
	Let $\varphi: M \rightarrow N$ be an \mc-mapping, $k\geq1$. If $q$ is a regular value of  $\varphi$,
	then the level set $\varphi^{-1}(q)$ is a submanifold  of $M$ and its tangent space at $p=\varphi(q)$ is $\ker T_p\varphi$.  Moreover, 
	if $q$ is a regular value of $ \varphi $  and $\varphi$ is an \mc-\li-Fredholm mapping of index $ l $, 
	then $\dim \varphi^{-1}(S) = l$. 
\end{cor}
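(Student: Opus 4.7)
The plan is to deduce this corollary directly from the Transversality Theorem~\ref{t1} by taking $S = \{q\}$, viewed as a $0$-dimensional \mc-submanifold of $N$. The key step is to check that $\varphi \pitchfork \{q\}$ at every $p \in \varphi^{-1}(q)$. Since $T_q\{q\} = \{0_F\}$, condition~(1) in the definition of transversality reduces to $(T_p\varphi)(T_pM) = T_qN$, which is exactly the regularity hypothesis at $q$; and condition~(2) reduces to the requirement that $\ker T_p\varphi$ split in $T_pM$. In the \li-Fredholm setting the kernel is finite-dimensional, so this splitting is automatic; for the general \mc\ statement it is the one additional ingredient that the notion of regular value must be understood to supply (i.e.\ that $\varphi$ be a submersion at every preimage of $q$).

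Once $\varphi \pitchfork \{q\}$ is established, Theorem~\ref{t1} immediately yields that $\varphi^{-1}(q)$ is an \mc-submanifold of $M$, and the tangent space formula from the same theorem specializes to
\[
T_p(\varphi^{-1}(q)) = (T_p\varphi)^{-1}(T_q\{q\}) = (T_p\varphi)^{-1}(\{0_F\}) = \ker T_p\varphi,
\]
which gives the asserted description of the tangent space at $p \in \varphi^{-1}(q)$.

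For the dimension assertion, under the extra hypothesis that $\varphi$ is an \mc-\li-Fredholm mapping of index $l$, I invoke the last sentence of Theorem~\ref{t1} with $m = \dim \{q\} = 0$ to conclude $\dim \varphi^{-1}(q) = l + m = l$. The only conceptual hurdle is the definitional check concerning the splitting of $\ker T_p\varphi$: in the Fredholm case it comes for free from finite-dimensionality, while in the purely smooth case it is the single point where one must read (or impose) a submersion condition on the regularity assumption. Apart from this, the proof is simply unwinding Theorem~\ref{t1} in the special case $S = \{q\}$.
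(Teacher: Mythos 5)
Your proposal is correct and follows exactly the route the paper intends: the corollary is stated there as an immediate consequence of Theorem~\ref{t1}, obtained precisely by taking $S=\{q\}$ as a $0$-dimensional submanifold, so that transversality condition~(1) is the regularity of $q$ and the tangent-space and dimension formulas specialize as you write. Your remark that condition~(2), the splitting of $\ker T_p\varphi$, is automatic in the Fredholm case but must be read into the notion of regular value in the general \mc\ case is a legitimate and correctly identified subtlety that the paper passes over silently.
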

To prove the parametric transversality theorem we apply the following Sard's theorem.
\begin{theorem}\cite[Theorem 3.2]{k2}\label{sard}
	If $ \varphi: M \rightarrow N $ is an
	\mc-Lipschitz-Fredholm map with $  k > \max \lbrace {\Ind \varphi,0} \rbrace $. Then, the set of regular
	values of $ \varphi$ is residual in $ N $.
\end{theorem}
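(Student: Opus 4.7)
The plan is to adapt Smale's classical proof of Sard's theorem for Fredholm maps to the \mc-\fr setting, combining the inverse function theorem (Theorem~\ref{invr}) with the classical finite-dimensional Sard theorem.

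First, I would reduce to a local statement. Fix $x_0 \in M$ with $y_0 = \varphi(x_0)$. The Lipschitz-Fredholm hypothesis on $\dd\varphi(x_0)$ yields splittings $E = \mathbf{E}_1 \oplus \ker \dd\varphi(x_0)$ and $F = \Img \dd\varphi(x_0) \oplus \mathbf{F}_1$, with both $\ker \dd\varphi(x_0)$ and $\mathbf{F}_1$ finite-dimensional and with $\dim \ker \dd\varphi(x_0) - \dim \mathbf{F}_1 = \Ind \varphi$. The straightening mechanism from the necessity direction of Proposition~\ref{p1} then produces, via Theorem~\ref{invr}, a local \mc-diffeomorphism of the source after which $\varphi$ takes the normal form
\[
(u, v) \longmapsto \bigl(u,\, \eta(u, v)\bigr), \qquad u \in \Img \dd\varphi(x_0),\; v \in \ker \dd\varphi(x_0),
\]
with $\eta$ valued in the finite-dimensional space $\mathbf{F}_1$. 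This is where both Theorem~\ref{invr} and the metric topology on $\mathbb{L}_{\sigma,\varrho}$ are essential, since without them one cannot invert the relevant auxiliary map.

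Second, I would transfer regularity to the finite-dimensional slices. In the normal form above, a nearby point $(u, f)$ is a regular value of $\varphi$ if and only if $f$ is a regular value of the finite-dimensional slice map $\eta(u, \cdot)$ from an open subset of $\ker \dd\varphi(x_0)$ (of dimension $\Ind \varphi + \dim \mathbf{F}_1$) into $\mathbf{F}_1$. The classical Sard theorem on this slice requires differentiability class strictly exceeding the source-minus-target gap $\Ind \varphi$, and since the hypothesis is $k > \max\{\Ind \varphi, 0\}$, the theorem applies and the critical values of $\eta(u, \cdot)$ form a Lebesgue null, hence meager, subset of $\mathbf{F}_1$.

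Third, I would globalize by a Baire category argument. The chart-local critical set of $\varphi$ is $\sigma$-compact (being a countable union of closed pieces on compact exhaustions of the source chart), and each such closed piece meets every affine slice $\{u\} \oplus \mathbf{F}_1$ of the target chart in a closed Lebesgue-null set, hence nowhere dense in $\mathbf{F}_1$; a standard slicing argument then upgrades each piece to a nowhere dense subset of the target chart. Covering $M$ by countably many such charts via second countability exhibits the global set of critical values as a countable union of nowhere dense sets, therefore meager in $N$; by the Baire property of the \fr manifold $N$, its complement, the set of regular values, is residual. I expect the main obstacle to be the final passage in step three from slice-wise nowhere-denseness to nowhere-denseness of the closed critical pieces in the infinite-dimensional chart product, which is standard in the Banach setting but requires care in the \fr setting, in particular an honest verification that the chart transitions preserve the relevant meagerness.
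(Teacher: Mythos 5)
Note first that the paper does not prove this statement at all: it is quoted verbatim from \cite[Theorem 3.2]{k2}, so there is no in-paper proof to compare with; the proof in the cited source is the Smale-type argument you outline, and in fact the normal form you derive through Proposition~\ref{p1} and Theorem~\ref{invr} is available more directly as Theorem~\ref{rp}, i.e.\ $(u,v)\mapsto(u,\eta(u,v))$ with $\eta$ valued in the finite-dimensional cokernel factor. Your first two steps are sound, including the exponent bookkeeping: the slice maps $\eta(u,\cdot)$ go from an open set of dimension $\Ind\varphi+\dim\mathbf{F}_1$ into $\mathbf{F}_1$, so classical Sard needs exactly $k>\max\{\Ind\varphi,0\}$.

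The genuine gap is in your third step. In an infinite-dimensional \f space no open set admits a compact exhaustion (compact subsets have empty interior), so the claim that the chart-local critical set is $\sigma$-compact ``on compact exhaustions of the source chart'' is not available and is in general false; likewise ``Lebesgue null, hence meager'' is not a valid implication (there exist dense $G_\delta$ null sets), so nullity of the slice-wise critical values does not by itself yield meagerness. The repair is Smale's device, which uses compactness only in the finite-dimensional direction: in the normal-form chart the critical points are exactly the points where $\dd_2\eta(u,v)$ fails to be surjective, a closed condition because it is a continuous map into a finite-dimensional space of linear maps; restrict to closed blocks $\overline{\vv_1}\times\overline{\vv_2}$ with $\overline{\vv_2}$ a compact neighborhood in the kernel factor. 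If critical points $(u_i,v_i)$ in such a block have $\varphi(u_i,v_i)\to y$, the $u_i$ converge because they are the first components of the images and the $v_i$ subconverge by compactness, so the critical values of the restriction form a closed set; each $u$-slice of this set is precisely the set of critical values of $\eta(u,\cdot)$, hence null by classical Sard, so the set has empty interior, and closed plus empty interior gives nowhere dense (in the chart image, hence in $N$). Second countability of $M$ then gives a countable cover by such blocks and the Baire argument concludes. This replaces both the $\sigma$-compactness claim and the unspecified ``standard slicing argument'' that you yourself flagged as the weak point.
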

\begin{theorem}[The Parametric Transversality Theorem]\label{p}
	Let $ A $ be a manifold of dimension $ n $, $ S \subset N $ a submanifold of finite co-dimension $ m $.
	Let $ \varphi: M \times A \to N $ be an \mc-mapping, $ k\geq \set{1, n-m} $. If $ \varphi \pitchfork S $,
	then the  set of all points $ x \in M $ such that the mappings $$ \varphi_{x}: A \to N,\, (\varphi_{x}(\cdot)\coloneq \varphi (x,\cdot)) $$ are transversal to $ S $, is residual $ M $.
\end{theorem}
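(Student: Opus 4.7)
The strategy is to reduce the statement to Sard's theorem (Theorem~\ref{sard}) applied to a suitable projection map defined off the preimage of $S$. Assuming $W := \varphi^{-1}(S)$ is non-empty (otherwise $\varphi_x^{-1}(S) = \emptyset$ for every $x$ and the conclusion is vacuous), Theorem~\ref{t1} gives that $W$ is an $MC^k$-submanifold of $M \times A$ of codimension $m$, with
$$
T_{(x,a)}W \;=\; (T_{(x,a)}\varphi)^{-1}(T_yS), \qquad y = \varphi(x,a).
$$
Let $\pi : W \to M$ denote the restriction to $W$ of the canonical projection $M \times A \to M$. The proof will proceed by verifying (i) that $\pi$ is an $MC^k$-Lipschitz-Fredholm mapping of index $n-m$, and (ii) that $x \in M$ is a regular value of $\pi$ if and only if $\varphi_x \pitchfork S$. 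Granted these two claims, Theorem~\ref{sard} (whose hypothesis on $k$ is covered by the standing assumption $k \geq \max\{1, n-m\}$) produces a residual subset of $M$ consisting of regular values of $\pi$, which is exactly the set required.

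\textbf{Index and regular-value characterization.} Fix $(x,a) \in W$ and $y = \varphi(x,a)$. The differential $T\pi_{(x,a)} : T_{(x,a)}W \to T_xM$ is just $(v,w) \mapsto v$. Its kernel equals $\{0\} \times (T_a\varphi_x)^{-1}(T_yS)$, a finite-dimensional subspace of the finite-dimensional $T_aA$. A short diagram chase using $\varphi \pitchfork S$ identifies the cokernel with
$$
T_yN \,/\, \bigl(T_yS + T_a\varphi_x(T_aA)\bigr),
$$
a quotient of the $m$-dimensional space $T_yN/T_yS$. Applying rank--nullity to the composition $T_aA \xrightarrow{T_a\varphi_x} T_yN \to T_yN/T_yS$ yields $\dim \ker T\pi - \dim \Coker T\pi = n-m$. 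For claim (ii), surjectivity of $T\pi_{(x,a)}$ at each $(x,a)\in\pi^{-1}(x)$ translates exactly to $T_a\varphi_x(T_aA) + T_yS = T_yN$ at every $a \in \varphi_x^{-1}(S)$, which is the first transversality condition; the splitting condition in the definition of transversality is automatic since $T_aA$ is finite-dimensional.

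\textbf{Main obstacle.} The delicate point is claim (i): that $\pi$ is Lipschitz-Fredholm in the strong metric sense of Section~1, not merely algebraically Fredholm at each point. One must verify that the map $(x,a) \mapsto T\pi_{(x,a)}$ takes values in $\mathbb{L}_{\sigma,\varrho}$ and is continuous into the Lipschitz topology defined by the metric~\eqref{metric}. This should follow from the $MC^k$-regularity of $\varphi$ combined with the adapted-chart description provided by Proposition~\ref{p1}: in those charts, $\pi$ is realised locally as the canonical projection onto a direct summand, whose Lipschitz constant is controlled uniformly, and the splittings move continuously with the base point. Carrying out this verification is routine but demands care, as the Lipschitz topology on linear maps is precisely the feature that enables Sard's theorem and hence the whole reduction to be performed in the Fréchet setting.
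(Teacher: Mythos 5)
Your proposal follows essentially the same route as the paper: restrict the projection $M\times A\to M$ to $\varphi^{-1}(S)$, show this restriction is an \mc-Lipschitz-Fredholm map of index $n-m$, identify its regular values with exactly those parameters $x$ for which $\varphi_x\pitchfork S$, and conclude with Sard's theorem (Theorem~\ref{sard}). The only difference is cosmetic: you compute the index intrinsically via the cokernel identification $T_yN/(T_yS+T_a\varphi_x(T_aA))$ and rank--nullity, whereas the paper does it through an explicit splitting of the model space $E\times\rr^n$, which at the same time supplies the metric Lipschitz-Fredholm verification that you correctly flag but defer as routine.
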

\begin{proof}
Let $ \mathbf{S} = \varphi^{-1}(S) $, $ \pr_{M}: M \times A \to M $ the projection onto $ M $
and $ \pr_{\mathbf{S}} $ be its restriction to $ \mathbf{S} $. First, we prove that 
$ \pr_{\mathbf{S}} $ is an \mc-Fredholm-Lipschitz mapping of index $ n-m $, i.e., 
$$
T_{(m,a)}\pr_{\mathbf{S}} : T_{(m,a)}\mathbf{S} \to T_mM
$$
is a \li-Fredholm operator of index $ n-m $.

By Theorem \ref{t1} the inverse image $ \mathbf{S} $ is an \mc-submanifold of $ M \times A $, with model space $ \mathbb{S} $,
so that  $ \pr_{\mathbf{S}} $ is an \mc-mapping.

Let $ \pi_M $ and $ \pi_{\mathbf{S}} $ be the local representatives of $ \pr_{M} $ and $ \pr_{\mathbf{S}} $, respectively. We show that $ \pi_{M}$ and consequently  $ \pi_{\mathbf{S}} $ are \li-Fredholm operators of index $ n-m $.

Finite dimensionality of $ \rr^n $ and closedness of $ \mathbb{S} $ implies that 
$ K \coloneq \mathbb{S} + (\{0\}\times \rr^n) $ is closed in $ E \times \rr^n $.
Also, $ \codim K $ is finite because it contains the finite co-dimensional subspace $ \mathbb{S} $.
Therefore $ K $ has a finite-dimensional complement $ K_1 \subset E \times \set{0} $, that is $ E \times \rr^n = K \oplus K_1 $. 
Let $ K_2 \coloneq \mathbb{S} \cap \set{0} \times \rr^n $. Since $ K_2 \subset \rr^n $
we can choose closed subspaces $ \mathbb{S}_1 \subset \rr^n $ and $ \rr_0 \subset \set{0} \times \rr^n $
such that $ \mathbb{S} = \mathbb{S}_1  \oplus K_1$ and $ \set{0} \times \rr^n = K_1 \oplus \rr_0 $.
Whence, $ K = \mathbb{S}_1 \oplus K_1 \oplus \rr_0 $ and
$ E \times \rr^n  = \mathbb{S}_1 \oplus K_1 \oplus \rr_0 \oplus K_2$.

The mapping $ \pi_{\mathbf{S}} \mid _{\mathbb{S}_1 \oplus K_2}  : \mathbb{S}_1 \oplus K_2 \to E$
is an isomorphism, $ K_1 = \ker \pi_{\mathbf{S}} $, and 
$ \pi_M (K_2) $ is a finite dimensional complement to $ \pi_M(\mathbb{S}) $ in $ \rr^n $. Thus,
$ \pi_M $ is a \li-Fredholm operator and we have
\begin{align*}
	\Ind \pi_M &= \dim K_1 - \dim K_2 \\
	&= \dim (K_1 \oplus \rr_0) - \dim (\rr_0 \oplus K_2).
\end{align*}
Since, $ K_1 \oplus \rr_0 = \set{0} \times \rr^n $ and $ \rr_0 \oplus K_2 $ is a complement to
$ \mathbb{S} $ in $ E \times \rr^n $ and therefore its dimension is $ n $, so the index of $ \pi_M $
is $ n-m $.

Now, we prove that if $ x $ is a regular value of $ \pr_{\mathbf{S}} $ if and only if $ \varphi_{x} \pitchfork S $.
From the definition of $ \varphi \pitchfork $ we have $\forall (x,a) \in \mathbf{S}$
\begin{equation}\label{1}
(T_{(x,a)} \varphi)(T_x M \times T_a A ) + T_{\varphi(x,a)}S = T_{\varphi(x,a)}N,
\end{equation}
and
\begin{equation}\label{2}
(T_{(x,a)}\varphi)^{-1}(T_{\varphi_{(x,a)}}S)\, \mathrm{splits \, in}\,  T_{x}M \times T_{a}A. 
\end{equation}
Since $ A$ has finite dimension, it follows that the mapping $ a \in A \mapsto \varphi{(x,a)} $ for a fixed $ x \in M $
is transversal to $ S $ if and only if 
\begin{equation}\label{5}
\forall (x,a) \in \mathbf{S}, T_a \varphi_x (T_aA)+ T_{\varphi(x,a)}S = T_{\varphi(x,a)}S.
\end{equation}
Since $ \pr_{\mathbf{S}} $ is a \li-Fredholm mapping, $ \ker T \pr_{\mathbf{S}} $ splits at any point as its dimension 
is finite. Then $ x $ is a regular value of $ \pr_{\mathbf{S}} $ if and only if 
\begin{equation}\label{3}
\forall (x,a) \in \mathbf{S}, \forall v \in T_xM, \exists u \in T_aA \colon T_{(v,u)} \varphi (v,u)\in T_{(x,a)}S. 
\end{equation}
Pick $ x \in M $ and $ a \in A $ such that $ (x,a) \in \mathbf{S} $ and let $ w \in T_{(x,a)}S $. By
\eqref{1} and \eqref{2} we obtain that there exist $ v \in T_aA,\, x_1 \in T_xM ,\, y_1 \in T_{(x,a)}S$
such that
\begin{equation}\label{4}
T_{(x,a)}\varphi(v,x_1) + y_1 = w.
\end{equation}
Then, there exists $ x_2 \in T_xM $ such that $ T_{(x,a)}\varphi (v,x_2) \in T_{\varphi_{(x,a)}}S $.
Hence, 
\begin{align*}
	w &= T_{(x,a)}\varphi(v,x_1) - T_{(x,a)}\varphi(v,x_2) + T_{(x,a)}\varphi(v,x_2) +y_1 \\
	  &= T_{(x,a)}\varphi(0,x_1-x_2) + T_{(x,a)}\varphi(v,x_2) +y_1 \\
	  &= T_{(x,a)}\varphi(0,u) + T_{\varphi(x,a)}S +y_2 \in T_a \varphi_x(T_aA),
\end{align*}
where $ u =x_1 -x_2 $ and $ y_2 = T_{(x,a)}\varphi (v,x_2) +y_1 \in T_{\varphi_{(x,a)}}S $. Thus,~\eqref{5} holds.

Now we show that \eqref{5} implies \eqref{3}. Pick $ a \in A,\, x \in M $ such that $ (x,a) \in \mathbf{S} $.
Let $ v \in T_xM,\, a_1 \in T_aA, \, y_1 \in T_{\varphi_{(x,a)}}S $ and set $ w \coloneq T_{(x,a)} \varphi_{(v,x_1)}+y_1 $. By \eqref{5} there exist $ a_2 \in T_aA $ and $ y_2 \in T_{\varphi_{(x,a)}}S $
such that $ w = T_a\varphi_x(a_2)+ y_2 $.
Then,
$$
0_E = T_{(x,a)}\varphi (v, a_1)-T_a\varphi_x(a_2)+y_1-y_2 = T_{(x,a)}\varphi (v, a_1-a_2) + y_1-y_2,
$$
so $ T_{(x,a)}\varphi (v, a_1-a_2) = y_2-y_1 \in T_{\varphi_{(x,a)}}S $ so \eqref{3} holds.
Thus, we showed  that if $ x $ is a regular value of $ \pr_{\mathbf{S}} $ if and only if $ \varphi_{x} \pitchfork S $.
Since $ \pr_{\mathbf{S}} : \mathbf{S} \to M $ is a \li-Fredholm of class \mc with the index $ n-m $ and
$ \codim \mathbf{S} = \codim S =m $ and $ k > \set{0, n-m} $, the Sard's theorem \ref{sard} concludes the theorem.
\end{proof}
\section{The degree of \li-Fredholm mappings}
In this section we construct the degree of \mc-\li-Fredholm mappings and apply it to prove an invariance of domain theorem, a rank theorem and a Bursuk-Ulam type theorem. The construction of the degree relies on the following transversality result.
\begin{theorem}\cite[Theorem 3.3]{k2} \label{transv}
	Let $\varphi: M \to N$ be an \mc-Lipschitz-Fredholm mapping, $k \geq 1$. 
	Let $\imath: \ab \to N$ be an $MC^1$-embedding of a finite dimension manifold $\ab$
	with $k > \max \{ \Ind \varphi + \dim \mathcal{A} ,0 \}$. Then
	there exists an $MC^1$ fine approximation $\mathbf{g}$ of $\imath$ such that $\mathbf{g}$ is embedding and  $\varphi \pitchfork \mathbf{g}$.
	Moreover, suppose $S$ is a closed subset of $\ab$ and $\varphi \pitchfork \imath(S)$, then 
	$\mathbf{g}$ can be chosen so that $\imath = \mathbf{g}$ on $S$.
\end{theorem}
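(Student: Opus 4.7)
My plan is to establish this by a Thom-Smale style approximation argument, constructing a finite-dimensional $ MC^{k} $-parametric family of perturbations of $ \imath $ supported away from $ S $ and then selecting a generic member via Theorem~\ref{p} and Sard's theorem~\ref{sard}.

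First I would fix a countable locally finite cover $ \{V_j\}_{j\in\mathbb N} $ of $ \ab $ by relatively compact open sets with $ \overline{V_j}\subseteq U_j $ and $ \imath(\overline{U_j}) $ lying in a chart $ (W_j,\psi_j) $ of $ N $; this is possible because $ \ab $ is finite-dimensional, hence second countable and paracompact. Pick a subordinate $ MC^{k} $-partition of unity $ \{\rho_j\} $; for the ``moreover'' clause multiply each $ \rho_j $ by an $ MC^{k} $-cut-off $ \beta\colon \ab\to[0,1] $ with $ \beta|_{S}=0 $ and $ \beta\equiv 1 $ outside an arbitrarily small \nbd of $ S $. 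Because $ \varphi $ is \li-Fredholm, at every $ x $ with $ \varphi(x)\in\imath(\overline{U_j}) $ the image $ \Img T_x\varphi $ has finite codimension in $ T_{\varphi(x)}N $; relative compactness of $ \overline{V_j} $ lets me select a single finite-dimensional subspace $ H_j\subseteq F $ complementing $ \Img T_x\varphi $ (read in the chart $ \psi_j $) for every such $ x $.

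Next, with $ P\opn\bigoplus_j H_j $ (a finite-dimensional manifold after a truncation made possible by local finiteness of $ \{V_j\} $ over compacta), define the $ MC^{k} $-family
$$
 G\colon \ab\times P\longrightarrow N,\qquad \psi_j(G(a,t))\;=\;\psi_j(\imath(a))+\sum_{\ell}\rho_{\ell}(a)\,t_{\ell},
$$
so that $ G(\cdot,0)=\imath $. Since embeddings are open in the fine $ MC^{1} $-topology, $ G_t:=G(\cdot,t) $ remains an embedding and an $ MC^{1} $-fine approximation of $ \imath $ for $ t $ close to $ 0 $. I would then form the incidence set
$$
 \mathbf{S}\;:=\;\bigl\{(x,a,t)\in M\times\ab\times P : \varphi(x)=G(a,t)\bigr\}
$$
and show that it is an $ MC^{k} $-submanifold by applying Theorem~\ref{t1} chart-wise: the choice of $ H_j $ makes the local defining map $ (x,a,t)\mapsto \psi_j(\varphi(x))-\psi_j(G(a,t)) $ an $ MC^{k} $-submersion onto the finite-codimensional complement. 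The projection $ \pr_P\colon\mathbf{S}\to P $ is then an $ MC^{k} $-\li-Fredholm map of index $ \Ind\varphi+\dim\ab $, by the same kernel/cokernel bookkeeping used inside the proof of Theorem~\ref{p}. The dimension hypothesis $ k>\max\{\Ind\varphi+\dim\ab,0\} $ is exactly what allows Sard's theorem~\ref{sard} to produce a residual set of regular values of $ \pr_P $ in $ P $; the same equivalence as in Theorem~\ref{p} shows ``$ t $ is regular for $ \pr_P $'' iff ``$ G_t\pitchfork\varphi $'', so choosing such a regular $ t $ arbitrarily close to $ 0 $ yields $ \mathbf{g}:=G_t $ with all the required properties, and the cut-off $ \beta $ enforces $ \mathbf{g}|_{S}=\imath|_{S} $.

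The main obstacle is that in a \fr target $ N $ the diagonal of $ N\times N $ is \emph{not} a finite-codimensional submanifold, so I cannot apply Theorem~\ref{p} directly to the pair $ (\varphi\times G,\Delta_N) $ as one would in the Banach setting. The finite codimension required to apply Sard's theorem must instead be extracted from the Fredholm character of $ \varphi $, and the local incidence submanifolds must be stitched into the global $ \mathbf{S} $ via the partition $ \{\rho_j\} $ while preserving the Lipschitz-Fredholm character of $ \pr_P $ — this chart-by-chart bookkeeping, together with the index computation matching the hypothesis on $ k $, is the delicate technical heart of the proof.
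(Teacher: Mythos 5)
Theorem~\ref{transv} is not proved in this paper at all: it is imported verbatim from \cite[Theorem 3.3]{k2} and used as a black box in the degree construction, so there is no internal proof to compare your argument against. What can be said is that your Thom--Smale scheme (finite-dimensional family of chart-wise perturbations of $\imath$, then a Sard/parametric-transversality selection of a generic parameter) is the same general strategy as in the cited source, which follows Smale and Tromba; so the plan is the right one in outline.

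However, as written the proposal has genuine gaps at exactly the points that are delicate in the \f setting. First, the step ``relative compactness of $\overline{V_j}$ lets me select a single finite-dimensional $H_j\subseteq F$ complementing $\Img T_x\varphi$ for every such $x$'' is unjustified: compactness of $\overline{V_j}$ lives in $\ab$, while the relevant set $\varphi^{-1}(\imath(\overline{U_j}))\subseteq M$ is in general non-compact, and the cokernel of $T_x\varphi$ varies over it. Fixing finitely many spanning directions requires covering $M$ itself by countably many charts in which $\varphi$ has the normal form of Theorem~\ref{rp} (equivalently, local properness of \li-Fredholm maps), and the conclusion then comes as a residual intersection over that countable cover --- not from one application of a parametric argument. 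For the same reason a single finite-dimensional parameter space $P$ obtained by ``truncating'' $\bigoplus_j H_j$ cannot handle a non-compact $\ab$ or a non-compact preimage in one shot; this is where the fine ($MC^1$) topology and a Baire-type globalization must actually enter, and you only gesture at them. Second, the assertion that embeddings are open in the fine $MC^1$-topology is itself a statement that must be proved in the \mc\ category, not quoted. Third, the relative clause is not settled by multiplying by a cut-off $\beta$ with $\beta|_S=0$: the derivative of $\beta$ need not vanish on $S$, so $T\mathbf{g}\neq T\imath$ at points of $S$, and transversality there is not literally inherited from $\varphi\pitchfork\imath(S)$. One must either make $\beta$ vanish on a whole \nbd of $S$ and then prove an openness-of-transversality statement on the collar (this is where Theorem~\ref{open} and the metric~\eqref{metric} are needed), or argue separately that sufficiently small $t$ preserves transversality over $\imath(S)$. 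These missing steps are precisely the non-routine content of \cite[Theorem 3.3]{k2}, so the proposal should be regarded as a correct outline rather than a proof.
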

We shall need the following theorem that gives the connection between proper and closed mappings.
\begin{theorem}\cite[Theorem 1.1]{as}\label{impo}
	Let $A, B$ be Hausdorff manifolds, where $A$ is a connected infinite
	dimensional Fr\'{e}chet manifold, and $B$ satisfies the first countability axiom, and let
	$\varphi: A \to B$ be a
	continuous closed non-constant map. Then $\varphi$ is proper.
\end{theorem}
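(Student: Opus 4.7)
\medskip

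\noindent\textbf{Proof plan for Theorem~\ref{impo}.}
My plan is to split the proof into the two standard pieces that make up properness for a well-behaved target: compact fibers, and then upgrading ``closed with compact fibers'' to ``proper.'' Concretely, the strategy is first to show that $\varphi^{-1}(y)$ is compact for every $y\in B$, and then to invoke the following classical fact from general topology: \emph{a continuous closed map with compact fibers into a first-countable Hausdorff space is proper}. Because $B$ is first countable, once the fibers are known to be compact this is routine: given a compact $K\subset B$ and a sequence $(x_n)\subset \varphi^{-1}(K)$, first-countability lets me extract $\varphi(x_n)\to y_0\in K$, and the standard closed-plus-compact-fibers lemma then produces a cluster point of $(x_n)$ in $\varphi^{-1}(y_0)$. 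So the work is entirely in showing the fibers are compact.

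For the compact-fiber step, I would argue by contradiction: suppose $\varphi^{-1}(y_0)$ is not compact for some $y_0$. Fréchet manifolds are metrizable, so I can choose a sequence $(x_n)\subset \varphi^{-1}(y_0)$ with no convergent subsequence, and after passing to a subsequence make $\{x_n\}$ a closed discrete subset of $A$. Closedness of $\varphi$ forces $\varphi(\{x_n\})=\{y_0\}$ to be closed, which on its own is no contradiction, so the essential point is to move the $x_n$ slightly to nearby points $z_n$ whose images escape $y_0$. Here the infinite dimensionality enters: no neighborhood of $x_n$ in $A$ is compact, so I would use local charts into the modeling Fr\'echet space to produce, inside pairwise disjoint shrinking neighborhoods $U_n$ of $x_n$, points $z_n\in U_n$ with $\varrho(z_n,x_n)\to 0$. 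Continuity gives $\varphi(z_n)\to y_0$, and the set $\{z_n\}$ remains closed and discrete in $A$.

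The main obstacle is ensuring that infinitely many of these chosen $z_n$ satisfy $\varphi(z_n)\neq y_0$, because otherwise I have only produced more fiber points. This is where the hypotheses of connectedness of $A$ together with \emph{non-constancy} of $\varphi$ are indispensable. I would handle it by considering the open set $U:=\mathrm{int}(\varphi^{-1}(y_0))$: if $U=A$ then $\varphi\equiv y_0$, contradicting non-constancy, so $\partial U\neq\emptyset$ by connectedness, and every point of $\partial U$ fails to be an interior fiber point. Using that the original $(x_n)$ witnesses non-compactness of the fiber and that the fiber has empty interior locally near boundary points, I can arrange the initial sequence to lie in $\partial U$, after which $z_n$ with $\varphi(z_n)\neq y_0$ are available in each $U_n$. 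Then $\{z_n\}$ is a closed discrete set in $A$, so $\varphi(\{z_n\})$ must be closed in $B$, but it is a sequence of pairwise distinct points of $B$ converging to $y_0$ with $y_0\notin\varphi(\{z_n\})$, contradicting first-countability and closedness of $\varphi$. This contradiction shows the fiber was compact to begin with, and combined with the first step completes the proof.
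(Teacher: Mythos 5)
First, note that the paper does not prove this statement at all: it is imported verbatim from Sadyrkhanov \cite{as}, so the only meaningful comparison is with your argument on its own merits (and with the original source, whose proof runs through a Vainstein-type lemma that for a closed map the \emph{boundary} of each fiber is compact). Your second step is fine — indeed ``closed with compact fibers implies proper'' holds without any first-countability hypothesis — so everything hinges on your compact-fiber step, and that is where there is a genuine gap.

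Your perturbation construction (closed discrete $\{x_n\}$ in the fiber, nearby $z_n$ with $\varphi(z_n)\neq y_0$ and $\varphi(z_n)$ chosen in a shrinking neighborhood basis of $y_0$, then closedness of $\varphi$ violated on the closed discrete set $\{z_n\}$) only works if the $x_n$ can be taken to be \emph{non-interior} points of the fiber, i.e.\ points of $\partial\varphi^{-1}(y_0)$. So what your argument actually proves is that $\partial\varphi^{-1}(y_0)$ is compact, not that $\varphi^{-1}(y_0)$ is. The sentence ``I can arrange the initial sequence to lie in $\partial U$'' is unjustified: if $\varphi^{-1}(y_0)$ is non-compact but has nonempty interior $U$ with \emph{compact} boundary, every closed discrete sequence witnessing non-compactness lies deep inside $U$, all nearby points still map to $y_0$, and no contradiction with closedness arises (closedness gives no information on sets contained in a single fiber). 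Ruling out this case is exactly where the infinite-dimensional manifold structure must be used beyond nowhere-local-compactness: one needs a statement of the type ``in a connected infinite-dimensional Fr\'echet manifold, a nonempty open set with compact boundary is dense'' (equivalently, a compact set cannot separate; equivalently, a proper closed subset with compact boundary is compact). Granting that, $U\neq\emptyset$ together with compactness of $\partial\varphi^{-1}(y_0)\supseteq\partial U$ would force $\overline U=A$, hence $\varphi^{-1}(y_0)=A$, contradicting non-constancy; so the fiber has empty interior and your boundary argument then does give compactness of the whole fiber. Your proposal neither identifies nor supplies this separation-type ingredient (connectedness and non-constancy alone, which are all you invoke, do not yield it), so as written the proof is incomplete at its central step.
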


Let $ \varphi: M \to N $ be a non-constant closed \li-Fredholm mapping with index $ l \geq 0$ of class \mc such that 
$k > l+1  $. If $ q $ is a regular value of $ \varphi $, then by Theorem~\ref{impo} and Corollary~\ref{s1} the preimage $ \varphi^{-1}(q) $
is a compact submanifold of dimension $ l .$ 

Let $ \imath: [0,1] \hookrightarrow N $ be an $ MC^1 $-embedding that connects two distinct regular values
$ q_1 $ and $ q_2 $. By Theorem~\ref{transv} we may suppose $ \imath $ is transversal to $ \varphi $.
Thus, by Theorem~\ref{t1} the preimage $ {\bf M} \coloneq \varphi^{-1} ( \imath ([0,1])) $
is a compact $ (l+1) $-dimensional submanifold of $ M $ such that its boundary, $ \partial {\bf M} $,
is the disjoint union of $ \varphi^{-1}(q_1) $ and $ \varphi^{-1}(q_2) $, 
$ \partial {\bf M} = \varphi^{-1}(q_1) \amalg \varphi^{-1}(q_2) $.
Therefore, $ \varphi^{-1}(q_1) $ and $ \varphi^{-1}(q_2) $ are non-oriented cobordant which gives the invariance of
the mapping. Following Smale~\cite{smale} we associate to $ \varphi $ a degree, denoted by $ \deg \varphi $,
defined as the non-oriented cobordism class of  $ \varphi^{-1}(q) $ for some regular value $ q $.
If $ l=0 $, then $ \deg \varphi \in \Z_2 $ is the number modulo 2 of preimage of a regular value.

Let $ \mathcal{O} \opn M $. Suppose $ \varphi : \overline{\mathcal{O}} \to N $
is a non-constant closed continuous mapping such that its restriction to $ \mathcal{O} $
is an $ MC^{k+1} $-\li-Fredholm mapping of index $ k $, $ k\geq0 $.
Let $ p \in N \setminus \varphi (\partial \overline{\mathcal{O}}) $ and let $ \mathbf{p} $ a regular value of $ \varphi $
in the connected component of $  N \setminus \varphi (\partial \overline{\mathcal{O}}) $ containing $ p $,
the existence of such regular value follows from Sard's theorem \ref{sard}. Again, we associate to $ \varphi $ a degree,
$\deg (\varphi,p)$, defined as non-oriented class of $ k $-dimensional compact manifold 
$ \varphi^{-1}(\mathbf{p}) $. This degree does not depend on the choice of $ \mathbf{p} $.

The following theorem which presents the local representation of \mc-mappings is crucial for the rest of the paper.
\begin{theorem}\cite[Theorem 4.2]{k1} \label{rp}
	Let $ \varphi : \uu \opn E \rightarrow F $ be an $ MC^k $-mapping, $ k \geq 1 $, $ u_0 \in \uu $. Suppose that 
	$\dd \varphi(u_0)$ has closed split image $ \bl {F_1} $ with closed topological complement $\bl{F_2}$ 
	and split kernel $ \bl{E_2}$ with closed topological complement $ \bl {E_1}$. Then, there are two open sets
	$ \uu_1 \opn \uu $ and
	$ \vv \opn \bl{F_1} \oplus \bl{E_2} $ and an $ MC^k$-diffeomorphism $ \Psi : \vv \rightarrow \uu_1 $,
	such that $ (\varphi \circ \Psi)(f,e) = (f,\eta (f,e))$ for all $ (f,e) \in \vv $, where $ \eta : \vv \to \bl{E_2} $ 
	is an $ MC^k$- mapping.
\end{theorem}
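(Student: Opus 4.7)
The plan is to reduce the statement to the inverse function theorem (Theorem~\ref{invr}) by straightening out the $\mathbf{F_1}$-component of $\varphi$. After translating, assume $u_0 = 0_E$ and $\varphi(u_0) = 0_F$. Let $\pi_1,\pi_2$ be the continuous linear projections associated with $F = \mathbf{F_1} \oplus \mathbf{F_2}$, and let $q_1,q_2$ be the projections associated with $E = \mathbf{E_1} \oplus \mathbf{E_2}$. Define
\[
\Phi : \uu \to \mathbf{F_1} \oplus \mathbf{E_2}, \qquad \Phi(u) = \bigl(\pi_1 \circ \varphi(u),\; q_2(u)\bigr).
\]
Composition with the continuous linear projections preserves the $MC^k$-class, so $\Phi$ is $MC^k$.

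Next I would compute $\dd\Phi(0_E) \cdot h = (\pi_1 \circ \dd\varphi(0_E)h,\, q_2(h))$. Since by hypothesis $\ker \dd\varphi(0_E) = \mathbf{E_2}$ and $\Img \dd\varphi(0_E) = \mathbf{F_1}$, the restriction $T := \dd\varphi(0_E)|_{\mathbf{E_1}} : \mathbf{E_1} \to \mathbf{F_1}$ is a continuous linear bijection, and it is a topological isomorphism because $\mathbf{E_1}$ and $\mathbf{F_1}$ are closed complemented subspaces of Fr\'echet spaces. Thus $\dd\Phi(0_E)$ is the direct sum $T \oplus \id_{\mathbf{E_2}} : \mathbf{E_1} \oplus \mathbf{E_2} \to \mathbf{F_1} \oplus \mathbf{E_2}$, which is an isomorphism. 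Composing with the fixed isomorphism $\Lambda := T^{-1} \oplus \id_{\mathbf{E_2}} : \mathbf{F_1} \oplus \mathbf{E_2} \to E$ yields $\Lambda \circ \Phi : \uu \to E$ with $(\Lambda \circ \Phi)'(0_E) = \id_E \in \Aut(E)$. By Theorem~\ref{invr}, $\Lambda \circ \Phi$ is a local $MC^k$-diffeomorphism, hence so is $\Phi$; denote its local inverse $\Psi : \vv \to \uu_1$.

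For $(f,e) \in \vv$ put $u = \Psi(f,e)$. Then $\Phi(u) = (f,e)$ gives $\pi_1 \varphi(u) = f$ and $q_2(u) = e$, whence
\[
\varphi(\Psi(f,e)) = \pi_1 \varphi(u) + \pi_2 \varphi(u) = f + \eta(f,e),
\]
where $\eta := \pi_2 \circ \varphi \circ \Psi \in MC^k(\vv, \mathbf{F_2})$; under the identification $F \simeq \mathbf{F_1} \oplus \mathbf{F_2}$ this is precisely the desired normal form $(\varphi \circ \Psi)(f,e) = (f,\eta(f,e))$. The main obstacle is that Theorem~\ref{invr} is stated for self-maps of a single Fr\'echet space, while $\Phi$ goes between two a priori distinct spaces; the fix is to compose with the fixed topological isomorphism $\Lambda$, but one must verify that pre/post-composition with $\Lambda$ preserves both the $MC^k$-class and the Lipschitz topology on linear maps underlying the $MC^k$-calculus. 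This is where the assumption that the splittings are \emph{topological} (with closed complements) is essential: it guarantees $\Lambda$ is an $MC^k$-isomorphism compatible with the metrics on $\mathbb{L}_{\sigma,\varrho}$, and the reduction goes through.
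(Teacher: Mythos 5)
Your construction is the standard straightening argument for this local representation theorem, and it is essentially the intended one: the present paper does not prove Theorem~\ref{rp} at all but quotes it from \cite{k1}, where it is obtained by the same device of inverting $u\mapsto(\pi_1\varphi(u),q_2(u))$ through the inverse function theorem (Theorem~\ref{invr}) and setting $\eta=\pi_2\circ\varphi\circ\Psi$. Your computation of $\dd\Phi(u_0)=T\oplus\id_{\mathbf{E_2}}$ and the passage to $\Lambda\circ\Phi$ are correct, and you also implicitly correct a typo in the statement: $\eta$ takes its values in $\mathbf{F_2}$, the complement of the image, not in $\mathbf{E_2}$.

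There is, however, one genuine gap, and it sits exactly at the point you only gesture at in your closing sentences. You assert that the splittings being \emph{topological} (closed complements, open mapping theorem) guarantees that $\pi_1$, $q_2$, $T^{-1}$, and hence $\Lambda$, are compatible with the $MC^k$-calculus. In this framework that implication fails: the open mapping theorem yields only \emph{continuity} of $T^{-1}$ and of the projections, whereas $MC^k$-differentiability requires derivatives to lie in $\mathbb{L}_{\sigma,\varrho}$ and to vary continuously for the metric \eqref{metric}, and a continuous linear map between Fr\'echet spaces need not be globally Lipschitz for the fixed metrics --- this discrepancy between $CL(E,F)$ and $\mathbb{L}_{\sigma,\varrho}(E,F)$ is precisely the difficulty stressed in the introduction. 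Even a continuous rank-one projection can fail to be Lipschitz for a metric of the given sup-type when the seminorms of the complementing vector grow quickly. Concretely, without $\pi_1,q_2\in\mathbb{L}$ you cannot conclude that $\Phi$ is $MC^k$, and without $T^{-1}\in\mathbb{L}$ you cannot conclude that $\Lambda\circ\Phi$ is an $MC^k$-map to which Theorem~\ref{invr} applies, nor that the local inverse of $\Phi$ itself (rather than of $\Lambda\circ\Phi$) is $MC^k$. The repair is to read ``split'' in the Lipschitz category, i.e.\ to require that the projections attached to $E=\mathbf{E_1}\oplus\mathbf{E_2}$ and $F=\mathbf{F_1}\oplus\mathbf{F_2}$, together with the inverse of $T=\dd\varphi(u_0)|_{\mathbf{E_1}}$, belong to the corresponding spaces $\mathbb{L}$ (which is presumably how the hypothesis is to be understood in \cite{k1}), or else to verify these Lipschitz estimates explicitly; with that reading the rest of your argument goes through verbatim.
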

\begin{theorem}[Rank theorem for \mc-mappings]
	Let $\varphi : \uu \opn E \to F$ be an \mc-mapping, $k\geq 1$.
	Suppose $u_0 \in \uu$ and $\dd \varphi(u_0)$ has  closed split image $\bl{F_1}$
	with closed complement $\bl {F_2}$ and split kernel $\bl{E_2}$ with closed complement $\bl{E_1}$.
	Also, assume $\dd \varphi (\uu)(E)$ is closed in F and 
	$\dd \varphi(u)|_{\bl{E_1}} : \bl{E_1} \to \dd \varphi(u)(E)$ is an \mc-isomorphism for each $u \in \uu$.
	Then, there exist open sets $\uu_1 \opn \bl{F_1} \oplus \bl{E_2}, \, \uu_2 \opn E, \, \vv_1 \opn F$,
	and $\vv_2 \opn F$ and there are \mc-diffeomorphisms $\phi: \vv_1 \to \vv_2$ and $\psi: \uu_1 \to \uu_2$ such that
	$$
	(\phi \circ \varphi \circ \psi) (f,e) = (f,0), \quad \forall (f,e) \in \uu_1.
	$$
\end{theorem}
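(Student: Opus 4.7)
My plan is to invoke the local representation Theorem~\ref{rp} to put $\varphi$ into submersion-onto-image normal form, then use the constant-rank hypothesis to strip the dependence on the ``fibre'' variable $e$, and finally flatten the resulting graph by an \mc-diffeomorphism on the target side. Translating so that $u_0 = 0_E$ and $\varphi(u_0)=0_F$, Theorem~\ref{rp} supplies open sets $\uu_2 \opn \uu$, $\uu_1 \opn \bl{F_1}\oplus\bl{E_2}$, an \mc-diffeomorphism $\psi:\uu_1\to\uu_2$, and an \mc-mapping $\eta:\uu_1\to\bl{F_2}$ with
\[
(\varphi\circ\psi)(f,e) = (f,\eta(f,e)), \qquad (f,e)\in\uu_1,
\]
and $\eta(0,0)=0$. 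Differentiating gives $\dd(\varphi\circ\psi)(f,e)(f',e') = (f',\,\partial_f\eta(f,e)f'+\partial_e\eta(f,e)e')$, and since $\dd\psi(f,e)$ is an isomorphism, the image of this operator equals $\dd\varphi(\psi(f,e))(E) = \dd\varphi(\psi(f,e))(\bl{E_1})$, which by hypothesis is \mc-isomorphic to $\bl{E_1}$ via $\dd\varphi(\psi(f,e))|_{\bl{E_1}}$.

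The central step is to show $\partial_e\eta\equiv 0$ on a neighborhood of $(0,0)$. The idea is to note that $\pr_{\bl{F_1}}\circ\dd\varphi(u_0)|_{\bl{E_1}}:\bl{E_1}\to\bl{F_1}$ is an isomorphism (since $\dd\varphi(u_0)(\bl{E_1})=\bl{F_1}$), and then use continuity of $u\mapsto\dd\varphi(u)$ into $\mathbb{L}_{\sigma,\varrho}$ together with openness of isomorphisms in $\mathbb{L}_{\sigma,\varrho}(\bl{E_1},\bl{F_1})$ (the distinctive feature of the Lipschitz topology that resolves the first openness obstacle mentioned in the introduction) to conclude that $\pr_{\bl{F_1}}\circ\dd\varphi(u)|_{\bl{E_1}}$ remains an isomorphism for $u$ in a neighborhood of $u_0$. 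This says $\dd\varphi(u)(\bl{E_1})$ is a graph over $\bl{F_1}$; comparing with the explicit description of the image, namely $\{(f',\,\partial_f\eta(f,e)f' + \partial_e\eta(f,e)e'):f'\in\bl{F_1},\,e'\in\bl{E_2}\}$, which is a graph over $\bl{F_1}$ if and only if $\partial_e\eta(f,e)=0$, yields the claim.

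With $\partial_e\eta\equiv 0$ on a connected neighborhood of $(0,0)$, we have $\eta(f,e)=\zeta(f)$ for an \mc-mapping $\zeta$ on an open subset of $\bl{F_1}$ with $\zeta(0)=0$. I would then define $\phi:\vv_1\to\vv_2$ on a suitable neighborhood $\vv_1$ of $0_F$ in $F=\bl{F_1}\oplus\bl{F_2}$ by
\[
\phi(f_1,f_2) := (f_1,\,f_2-\zeta(f_1)),
\]
which is an \mc-diffeomorphism with inverse $(f_1,f_2)\mapsto(f_1,f_2+\zeta(f_1))$. Then $(\phi\circ\varphi\circ\psi)(f,e)=\phi(f,\zeta(f))=(f,0)$ for all $(f,e)\in\uu_1$, as required.

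The main obstacle is the middle step: converting the pointwise constant-rank hypothesis into the pointwise vanishing $\partial_e\eta \equiv 0$ on a neighborhood. The argument that $\dd\varphi(u)(\bl{E_1})$ remains a graph over $\bl{F_1}$ for $u$ near $u_0$ rests entirely on the openness of isomorphisms in $\mathbb{L}_{\sigma,\varrho}(\bl{E_1},\bl{F_1})$, which fails in the compact-open topology on $CL(\bl{E_1},\bl{F_1})$ and is the reason the bounded-differentiability framework is indispensable here. Once this is secured, everything else is routine.
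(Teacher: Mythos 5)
Your proposal is correct and shares the paper's skeleton: both proofs start from Theorem~\ref{rp} to get the normal form $(\varphi\circ\psi)(f,e)=(f,\eta(f,e))$, both then use the constant-rank hypothesis to kill the dependence of $\eta$ on the fibre variable, and both finish by straightening the graph $(f,\zeta(f))$ on the target side. The differences lie in how the two nontrivial steps are executed. For the middle step the paper argues pointwise: it identifies $\bl{E_1}$ with $\bl{F_1}\times\set{0}$ in the new chart, notes that $\dd(\varphi\circ\psi)(f,e)|_{\bl{F_1}\times\set{0}}$ is injective with $\pr_1$-component the identity, and invokes the hypothesis to say this restriction is onto the image, from which $\dd_2\eta=0$ follows; your route instead perturbs $\pr_{\bl{F_1}}\circ\dd\varphi(u)|_{\bl{E_1}}$ from the isomorphism at $u_0$ and concludes the image stays a graph over $\bl{F_1}$ near $u_0$. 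Your version is arguably more careful than the paper's (the paper's transfer of the hypothesis from $\bl{E_1}$ to $\bl{F_1}\times\set{0}$ at points other than $u_0$ is implicit), but it imports an extra ingredient: openness of the set of isomorphisms in $\mathbb{L}_{\sigma,\varrho}(\bl{E_1},\bl{F_1})$ together with continuity of $u\mapsto\pr_{\bl{F_1}}\circ\dd\varphi(u)|_{\bl{E_1}}$, which is not stated in this paper and should be cited from the M\"{u}ller/\cite{k1} framework (it also silently assumes the projection $\pr_{\bl{F_1}}$ is globally Lipschitz for the chosen metrics, a convention the paper itself uses freely, e.g.\ with $\mathrm{Pr}_1$ in its own proof). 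For the last step the paper builds $\Phi(f,e)=\varphi(f)+(0,e)$ and applies the inverse function theorem~\ref{invr}; your explicit shear $(f_1,f_2)\mapsto(f_1,f_2-\zeta(f_1))$ is literally the inverse the paper obtains, so you avoid the IFT at no cost. One small point to make explicit in a write-up: passing from $\dd_2\eta\equiv0$ to $\eta(f,e)=\zeta(f)$ needs a mean-value/integral argument on a neighborhood that is convex (or at least connected fibrewise) in the $e$-direction, a detail the paper also leaves implicit.
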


\begin{proof}
	By Theorem \ref{rp} there exits an \mc-diffeomorphism
	$
	\psi : \uu_1 \opn \bl{F_1} \oplus \bl{E_2} \to \uu_2 \opn E
	$
	such that
	$$
	\varphi(f,e) = (\varphi \circ \psi)(f,e) = (f, \eta (f,e)),
	$$
	where $ \eta : \vv \to \bl{E_2} $ 
	is an $ MC^k$- mapping. Let $\mathrm{Pr}_1 : F \to \bl{F_1}$ be the projection.
	We obtain $\mathrm{Pr}_1 \circ \dd \varphi (f,e)(w,v) = (w,0)$, for $w \in \bl{F_1}$ 
	and $v \in \bl{E_2}$ because
	$$
	\dd \varphi (f,e)(w,v) = (w, \dd \eta (f,e)(w,v)).
	$$
	Hence, $\mathrm{Pr}_1 \circ \dd \varphi (f,e)|_{\bl{F_1}\times \set{0}}$ is the identity mapping, $\id_{\bl{F_1}}$,
	on $\bl{F_1}$. Thereby,
	$$
	\dd \varphi (f,e)|_{\bl{F_1}\times \set{0}} : \bl{F_1}\times \set{0} \to \dd \varphi (f,e) (\bl{F_1} \oplus \bl{E_2})
	$$
	is one-to-one and therefore by our assumption 
	$$
	\dd \varphi (f,e) \circ \pr_1 |_{\dd \varphi (f,e)(\bl{F_1} \oplus \bl{E_2})}
	$$
	is the identity mapping. Suppose $(w, \dd \eta (f,e)(w,v)) \in \dd \varphi (f,e)(\bl{F_1} \oplus \bl{E_2}) $,
	we obtain $\dd \eta (f,e)v =0$ for all $v \in \bl{E_2}$, which means $\dd_2\eta(f,e)=0$, since
	\begin{align*}
		(\dd \varphi (f,e) \circ \pr_1)(w, \dd \eta (f,e)(w,v)) =&  \dd \varphi (f,e)(w,0) \\
		                                                        =& (w, \dd \eta (f,e)(w,0))\\
		                                                        =&(w,\dd_1 (f,e)w).  
	\end{align*} 
	We have $\dd^2 \varphi(f,e)v = (0, \dd_2 \eta (f,e)v)$, i.e., $\dd^2 \varphi (f,e) = 0$
	which means $\varphi$ does not depend on the variable $y \in \bl{E_2}$. Let $ \pr_2 : \bl{F_1} \oplus \bl{E_2} \to\bl{F_1} $ be the projection and
	$
	\varphi_f \coloneq \varphi (f,e) = (\varphi \circ \pr_2)(f,e),
	$
	so that $\varphi_f : \pr_2 (\uu_1) \subset \bl{F_1} \to F$.
	
	Let $ \bl{v_0} \coloneq (\pr_2 \circ \psi^{-1})(u_0) $ and $\vv \opn \uu$ be an open neighborhood of $\bl{v_0}$.
	 Define the mapping
	\begin{equation*}
	\begin{array}{cccc}
	\Phi : \vv \times \bl{F_2} \to \bl{F_1} \oplus \bl{F_2}\\
	\Phi(f, e) = \varphi (f) + (0,e).
	\end{array}
	\end{equation*}
	By the open mapping theorem 
	$$
	\dd \Phi (\bl{v_0},0) = (\dd \varphi (\bl{v_0}), \id_{\bl{F_2}}) : E \oplus \bl{F_2} \to F
	$$
	is a linear \mc-isomorphism, where $  \id_{\bl{F_2}}$ is the identity mapping of $\bl{F_2}$.
	Now $\Phi$ satisfies the inverse function theorem \ref{invr} at $\bl{v_0}$, therefore,
	there exist $\vv_1, \vv_2 \opn F$ such that $(\bl{v_0},0) \in \vv_2$ and 
	$\Phi (\bl{v_0},0) = \varphi_{\bl{v_0}}(\bl{v_0}) \in \vv_1$ and an \mc-diffeomorphism
	$\phi: \vv_1 \to \vv_2$ such that $\phi^{-1} = \Phi |_{\vv_1}$. Thus, for $ (f,0) \in \vv_2 $ we have
	$$
	(\phi \circ \varphi)(f) = (\varphi \circ \Phi)(f,0)= (f,0),
	$$
	and therefore, 
		$$
	(\phi \circ \varphi \circ \psi) (f,e) = (f,0), \quad \forall (f,e) \in \uu_1.
	$$
\end{proof}
As an immediate consequence we have the following:
\begin{cor}[Rank theorem for \li-Fredholm mappings]\label{li-ra}
	 Let $\varphi : M \to N$ be
	an $MC^{\infty}$-\li-Fredholm mapping of index $k$ and 
	$\dim \ker \dd \varphi (x) = m$, $\forall x \in M$. Let $\complement_1, \complement_2$ be topological complements of $\rr^m$ in $E$ and $\rr^{m-k}$ in $F$, respectively. Then, there exist charts
	$\phi : \uu \opn M \to E= \rr^m \oplus \complement_1$ with $\phi(x)=0_E$ 
	and $\psi :\vv \opn N \to F = \rr^{m-k} \oplus \complement_2$ with $\psi(x) = 0_F$ such that 
	$$
	\psi \circ \varphi \circ \phi^{-1}(f,0) = (f,0).
	$$
\end{cor}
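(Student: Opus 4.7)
My plan is to deduce the corollary from the \mc-rank theorem proved just above by verifying that its hypotheses hold automatically in the \li-Fredholm setting of constant kernel dimension. First, I pick a point $x_0 \in M$ and, after composing with coordinate charts on $M$ and $N$, reduce to a local map $\varphi:\uu \opn E \to F$ with $\varphi(0_E)=0_F$. The kernel $\ker\dd\varphi(0_E)$, being $m$-dimensional, is automatically closed and admits a closed complement in $E$ (any algebraic complement of a finite-dimensional subspace of a Hausdorff TVS is closed and splits), so identifying the kernel with $\rr^m$ yields $E=\rr^m\oplus\complement_1$. Fredholmness of index $k$ gives $\codim\Img\dd\varphi(0_E)=m-k$; setting $\complement_2\coloneq\Img\dd\varphi(0_E)$ and choosing a finite-dimensional (hence closed) complement identified with $\rr^{m-k}$ yields $F=\rr^{m-k}\oplus\complement_2$, so both the image and its complement split.

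The remaining hypothesis requiring work is that, for every $u$ in a neighborhood of $0_E$, the restriction $\dd\varphi(u)|_{\complement_1}:\complement_1\to\dd\varphi(u)(E)$ is an \mc-isomorphism and that $\dd\varphi(u)(E)$ is closed. To establish this I consider the auxiliary map
$$
A(u)\coloneq \pr_{\complement_2}\circ\dd\varphi(u)|_{\complement_1}:\complement_1\to\complement_2,
$$
where $\pr_{\complement_2}$ projects $F$ onto $\complement_2$ along $\rr^{m-k}$. By construction, $A(0_E)$ is a topological isomorphism, and the decisive point is that in the Lipschitz space $\mathbb{L}_{\sigma,\varrho}(\complement_1,\complement_2)$ the isomorphisms form an open set, precisely the feature that fails in $CL(E,F)$ and that motivated the introduction of the \mc-framework. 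Continuity of $u\mapsto\dd\varphi(u)$ into this space then keeps $A(u)$ an isomorphism for $u$ close to $0_E$. Surjectivity of $A(u)$ combined with the constant codimension $m-k$ of $\dd\varphi(u)(E)$ forces $\dd\varphi(u)(\complement_1)=\dd\varphi(u)(E)$; injectivity of $A(u)$ gives $\complement_1\cap\ker\dd\varphi(u)=\{0\}$. Thus $\dd\varphi(u)|_{\complement_1}$ is a continuous linear bijection onto the closed subspace $\dd\varphi(u)(E)$ and, by the Fr\'echet open mapping theorem, is an \mc-isomorphism.

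With all hypotheses verified, I apply the rank theorem with $\bl{F_1}=\complement_2$, $\bl{F_2}=\rr^{m-k}$, $\bl{E_1}=\complement_1$, $\bl{E_2}=\rr^m$. The resulting \mc-diffeomorphisms straighten $\varphi$ to its normal form near $0_E$, and composing with the initial manifold charts produces the required $\phi$ and $\psi$; identifying $\complement_1$ with $\complement_2$ via $A(0_E)$ then recovers the stated formula. The main obstacle is the neighborhood step: guaranteeing that the \mc-iso condition persists away from $0_E$, which fundamentally relies on the openness of isomorphisms in $\mathbb{L}_{\sigma,\varrho}$ rather than in $CL(E,F)$, so that the constant-rank hypothesis translates into the pointwise splitting data demanded by the rank theorem.
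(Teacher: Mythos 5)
Your proposal is correct and follows essentially the same route as the paper, which states the corollary as an immediate consequence of the rank theorem for \mc-mappings without giving a proof. Your verification of the rank theorem's hypotheses --- the automatic splitting of the finite-dimensional kernel and finite-codimensional (hence closed) image, and the persistence of the \mc-isomorphism condition $\dd\varphi(u)|_{\complement_1}\colon\complement_1\to\dd\varphi(u)(E)$ near the base point via the constant kernel dimension and the openness of isomorphisms in $\mathbb{L}_{\sigma,\varrho}$ --- supplies exactly the details the paper leaves implicit.
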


The following theorem gives the openness property of the set of \li-Fredholm mappings.
\begin{theorem}\cite[Theorem 3.2]{k1}\label{open}
The set $ \mathcal{LF}(E,F) $ is open in $\mathcal{L}_{d,g}(E,F)$ with respect to the topology defined by the metric
\eqref{metric}. Furthermore, the function $ T \rightarrow \Ind T $ is continuous on $ \mathcal{LF}(E,F) $,
hence constant on connected components of $ \mathcal{LF}(E,F) $.
\end{theorem}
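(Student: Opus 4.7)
My plan is to prove openness by a standard perturbation argument adapted to the Lipschitz topology, and then derive continuity and constancy of the index as immediate consequences. Fix $T \in \mathcal{LF}(E,F)$ and write $n = \dim \ker T$, $m = \codim \Img T$. Since $\ker T$ is finite-dimensional it splits in $E$, so $E = \ker T \oplus E_1$ for some closed complement $E_1$; likewise choose an $m$-dimensional complement $F_2$ with $F = \Img T \oplus F_2$. The restriction $T|_{E_1}\colon E_1 \to \Img T$ is a continuous linear bijection between \fr spaces, hence a topological isomorphism by the open mapping theorem; since $T$ is Lipschitz, so is its inverse. Extending $(T|_{E_1})^{-1}$ by zero on $F_2$ produces a Lipschitz parametrix $A\colon F \to E$ satisfying $AT = \id_E - \pr_{\ker T}$ and $TA = \id_F - \pr_{F_2}$, both finite-rank corrections of the identity.

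The key technical ingredient is that the set of Lipschitz isomorphisms is open in $\mathbb{L}_{\sigma,\varrho}(E,F)$ --- precisely the advantage of the Lipschitz topology over the compact-open topology on $CL(E,F)$ highlighted in defect (1) of the introduction. Given a Lipschitz isomorphism $R$, for any $S'$ sufficiently close to $R$ one has $\mathpzc{Lip}(\id - R^{-1}S') < 1$, so the Neumann-type series $\sum_{j \geq 0}(\id - R^{-1}S')^j$ converges in the Lipschitz metric and furnishes a Lipschitz inverse for $R^{-1}S'$, hence for $S'$. Granting this, let $S \in \mathbb{L}_{\sigma,\varrho}(E,F)$ with $\mathbbm{d}_{\sigma,\varrho}(S,T)$ small. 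Relative to the splittings $E = \ker T \oplus E_1$ and $F = \Img T \oplus F_2$, the $(\Img T, E_1)$-block of $S$ is a small Lipschitz perturbation of the isomorphism $T|_{E_1}$, hence itself a Lipschitz isomorphism. Row and column operations by suitable Lipschitz automorphisms of $E$ and $F$ then conjugate $S$ to the block form $\id_{\Img T} \oplus \Phi$ for some linear $\Phi\colon \ker T \to F_2$ between finite-dimensional spaces, from which
\begin{equation*}
\Ind S = \dim \ker \Phi - \codim \Img \Phi = n - m = \Ind T
\end{equation*}
is immediate by finite-dimensional rank--nullity. This simultaneously establishes that $S \in \mathcal{LF}(E,F)$ and that the index is locally constant at $T$.

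Continuity of $T \mapsto \Ind T$ on $\mathcal{LF}(E,F)$ then follows at once, since any integer-valued, locally constant function is continuous and is constant on each connected component. The main obstacle I expect is the preparatory openness of the Lipschitz isomorphisms, because the metric $\mathbbm{d}_{\sigma,\varrho}$ is not induced by a submultiplicative norm; one must verify directly the submultiplicativity of $\mathpzc{Lip}$ under composition and the convergence of the geometric series in the translation-invariant Abelian group $\mathbb{L}_{\sigma,\varrho}$, rather than in a Banach algebra. These estimates are part of the \mc-framework developed in \cite{m,k1}, and once they are in hand the remainder of the argument unwinds along classical Banach-space lines.
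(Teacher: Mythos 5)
First, note that the paper itself quotes this result from \cite[Theorem 3.2]{k1} without reproducing a proof, so your attempt can only be measured against the standard argument of the cited source; your scheme (parametrix, block decomposition relative to $E=\ker T\oplus E_1$, $F=\Img T\oplus F_2$, Neumann series, rank--nullity in the finite-dimensional block) is indeed that classical scheme. The genuine gap is the sentence ``since $T$ is Lipschitz, so is its inverse.'' In this framework, being in $\mathbb{L}_{\sigma,\varrho}$ is strictly stronger than continuity, and the open mapping theorem returns only a \emph{continuous} inverse. A Lipschitz linear bijection of Fr\'{e}chet spaces need not have a Lipschitz inverse: on $E=F=\mathbb{R}^{\mathbb{N}}$ with seminorms $\snorm[F,n]{x}=\max_{k\le n}|x_k|$ and the metric of the paper, the diagonal operator $T(x)_k=2^{-k}x_k$ satisfies $\mathpzc{Lip}(T)\le 1$ and is a topological isomorphism, yet its inverse has infinite Lipschitz constant (test on $te_m$ with $t\to 0$: the ratio of distances tends to $2^m$, unbounded in $m$). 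Since your whole block argument hinges on $T|_{E_1}$ being an isomorphism \emph{in the Lipschitz category} --- otherwise $\mathpzc{Lip}(\id-R^{-1}S')<1$ is meaningless and the Neumann series cannot be run --- this step cannot be waved through; it is exactly where additional input from \cite{k1,m} (or a strengthening of the definition of Lipschitz--Fredholm operator) is needed. The same caveat applies to your silent use of the projections $\pr_{\ker T}$, $\pr_{F_2}$ and of the parametrix $A$: continuity of a linear map, even one of finite rank, does not imply membership in $\mathbb{L}_{\sigma,\varrho}$, so the Lipschitz-ness of the chosen splittings must be argued, not assumed.

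Secondary remarks: the part you flag as the main obstacle is in fact the unproblematic part --- $\mathpzc{Lip}$ is subadditive and submultiplicative, and $\mathbb{L}_{\sigma,\varrho}(E,E)$ is complete for the metric \eqref{metric}, so the geometric series does converge; this is contained in \cite{m} and underlies Theorem \ref{invr}. Be aware, though, that $\mathpzc{Lip}$ is not homogeneous (with these bounded metrics one has $\mathpzc{Lip}(\epsilon\,\id)=1$ for every $0<\epsilon\le 1$), so ``small perturbation'' heuristics imported from normed algebras must be checked against the metric itself rather than against scaling. Once the Lipschitz invertibility of $T|_{E_1}$ and the Lipschitz-ness of the splittings are supplied, your block reduction, the identity $\Ind S=\dim\ker\Phi-\codim\Img\Phi=n-m$, and the deduction of local constancy and hence continuity of the index are correct and in the same spirit as the cited proof.
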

The proof of the following theorem is a minor modification
of~\cite[Theorem 2]{tr}.

\begin{theorem}\label{injec}
	Let $ \varphi: M \to N $ be a Lipschitz-Fredholm mapping of class  \mc, $k\geq 1$.
 Then, the set $\mathsf{Sing}(\varphi) \coloneq \set{m \mid \dd \varphi (m) \mathrm{is \, not \, injective}}$
 is nowhere dense in $ M $.
\end{theorem}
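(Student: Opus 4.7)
The argument splits into showing that $\mathsf{Sing}(\varphi)$ is closed and that it has empty interior. Closedness follows from Theorem~\ref{open}: the set of \li-Fredholm operators with trivial kernel is open in $\mathbb{L}_{\sigma,\varrho}(E,F)$, because for a Fredholm operator the kernel dimension is upper semicontinuous under small perturbations in the metric~\eqref{metric}. Since $x\mapsto \dd\varphi(x)$ is continuous into $\mathbb{L}_{\sigma,\varrho}$, the preimage $M\setminus\mathsf{Sing}(\varphi)$ is open, so it suffices to prove $\mathsf{Sing}(\varphi)$ has empty interior.

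For the empty-interior claim I would localize at an arbitrary $u_0\in\mathsf{Sing}(\varphi)$ and pass to charts in which $\dd\varphi_{\phi\psi}(0)$ is \li-Fredholm with (finite-dimensional) split kernel $\bl{E_2}$ and closed split image $\bl{F_1}$ of finite codimension $\bl{F_2}$. Theorem~\ref{rp} then supplies an $MC^k$-diffeomorphism $\Psi$ with
\begin{equation*}
(\varphi_{\phi\psi}\circ\Psi)(f,e)=\bigl(f,\,\eta(f,e)\bigr),\qquad (f,e)\in V\opn \bl{F_1}\oplus\bl{E_2},
\end{equation*}
where $\eta$ is $MC^k$ into the finite-dimensional space $\bl{F_2}$. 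Direct differentiation gives
\begin{equation*}
\dd(\varphi_{\phi\psi}\circ\Psi)(f,e)(f',e')=\bigl(f',\,\dd_1\eta(f,e)f'+\dd_2\eta(f,e)e'\bigr),
\end{equation*}
so this derivative is injective if and only if the partial derivative $\dd_2\eta(f,e):\bl{E_2}\to\bl{F_2}$ is injective. Pulling back by $\Psi$, the singular set is identified near $u_0$ with
\begin{equation*}
\Sigma=\bigl\{(f,e)\in V:\,\dd_2\eta(f,e)\text{ is not injective}\bigr\}.
\end{equation*}

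The main obstacle is to show $\Sigma$ is nowhere dense in $V$. The map $(f,e)\mapsto \dd_2\eta(f,e)$ takes values in the finite-dimensional vector space $\mathrm{Hom}(\bl{E_2},\bl{F_2})$, whose non-injective locus $\mathcal{N}$ is a closed proper algebraic subvariety (cut out by the vanishing of the $\dim\bl{E_2}$-sized minors) whenever $\dim\bl{E_2}\le\dim\bl{F_2}$, i.e.\ whenever $\Ind\varphi\le 0$ (the $\Ind\varphi>0$ case forces $\mathsf{Sing}(\varphi)=M$, so some such hypothesis is implicit). Following \cite[Theorem~2]{tr}, I would then apply the Sard theorem~\ref{sard} in the infinite-dimensional $f$-variable together with the classical finite-dimensional Sard theorem fiberwise in $e$, to show that $(f,e)\mapsto\dd_2\eta(f,e)$ is transverse to $\mathcal{N}$ on a residual subset of $V$, so that $\Sigma$ is of first category and hence nowhere dense. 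Since $u_0$ was arbitrary this gives the global conclusion; the translation from the Banach $C^k$ setting of \cite{tr} to the $MC^k$ \li-Fredholm setting is routine once Theorem~\ref{rp} and Theorem~\ref{sard} are available.
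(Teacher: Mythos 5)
Your reduction via Theorem~\ref{rp} is fine as far as it goes: identifying $\mathsf{Sing}(\varphi)$ near $u_0$ with the locus where the finite-dimensional partial derivative $\dd_2\eta(f,e)\colon \bl{E_2}\to\bl{F_2}$ fails to be injective is correct, and your closedness argument is the same as the paper's appeal to Theorem~\ref{open}. The gap is the concluding step. Sard's theorem (Theorem~\ref{sard}, or the classical one) yields residuality of regular \emph{values} of a given map; it cannot make the \emph{given} map $(f,e)\mapsto\dd_2\eta(f,e)$ transverse to the fixed determinantal variety $\mathcal{N}\subset\mathrm{Hom}(\bl{E_2},\bl{F_2})$ -- there is no parameter family here to vary, and the map may lie entirely inside $\mathcal{N}$. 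Concretely, for $\varphi(x,t)=(x,0)$ on $E\times\rr$ (an $MC^{\infty}$ Lipschitz--Fredholm map of index $0$) one has $\eta\equiv0$, so $\dd_2\eta\equiv0\in\mathcal{N}$ and $\mathsf{Sing}(\varphi)=M$. This example also shows that the statement is false for a general Fredholm map even with $\Ind\varphi\le 0$, so no argument that, like yours, never uses injectivity of $\varphi$ can close the gap: you flagged the positive-index issue, but the real implicit hypothesis (used in the paper's own proof and in Tromba's original, and the setting in which Theorem~\ref{in-do} applies the result) is that $\varphi$ is locally injective. A minor further point: discussing transversality of $\dd_2\eta$ to $\mathcal{N}$ would require differentiating $\dd_2\eta$, i.e.\ $k\ge2$, while the theorem assumes only $k\ge1$.

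The paper's route is different and uses exactly that missing ingredient. Assuming $\mathsf{Sing}(\varphi)$ had nonempty interior, it uses Theorem~\ref{open} (upper semicontinuity of the kernel dimension) to find a point of that interior near which $\dim\ker\dd\varphi$ is locally constant and $\ge1$, then invokes the rank theorem (Corollary~\ref{li-ra}) to put $\varphi$ in the local normal form $(f,e)\mapsto(f,0)$ with $e$ ranging over $\rr^{n_0}$, $n_0\ge1$; this contradicts local injectivity of $\varphi$, and together with closedness gives nowhere density. If you wish to keep your normal-form reduction via Theorem~\ref{rp}, the repair is the same: on an alleged open subset of $\Sigma$ pass to a point where $\dim\ker\dd_2\eta$ is locally constant, apply the constant-rank normal form, and derive a contradiction with local injectivity of $\varphi$ -- not with any genericity statement.
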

\begin{proof}
	This is a local problem so we assume $ M $ is an open set in $ E $ and 
	$ N $ is an open set in $ F $. Let $ s \in \mathsf{Sing} (\varphi) $ be arbitrary and $ \uu $
	an open neighborhood of $ s $ in $ \mathsf{Sing} (\varphi)$. For each 
	$ n \in \nn \cup \set{0} $ define
	\begin{equation*}
	S_n \coloneq \set{m \in M \mid \dim \dd \varphi (m) \geq n}.
	\end{equation*}
	Then, $ M = M_0 \supset M_1 \supset \cdots ,$ therefore, is a unique
	$ n_0 $ such that $ M = M_{n_0} \neq M_{n_0+1} $. Let $ m_0 \in M_{n_0} \setminus M_{n_{0}+1}$ 
	such that $ \dim \ker \dd \varphi (m_0) =n_0 $.
	By Theorem \ref{open}, there exists an open \nbd $ \vv$ of $ m_0 $ in $ \uu $
	such that for all $ v \in \vv $ we have $ \dim \ker \dd \varphi (v) \leq n_0 $ and hence 
	$ \dim \dd \varphi(v) = n_0 \geq 1 $.
	By Corollary \ref{li-ra}, there is a local representative $ \varphi $
	around zero such that
	$ \psi \circ \varphi \circ \phi^{-1}(f,e) = (f,0) $ for $ (f,e) \in \complement_1 \oplus \rr^{n_0} $
	which contradicts the injectivity of $ \varphi $, therefore, $ \mathsf{Sing}(\varphi) $ contains a nonempty open set.  The closedness of $\mathsf{Sing} (\varphi)$ is obvious in virtue of Theorem \ref{open}.
\end{proof}	
\begin{theorem}[Invariance of domain for \li-Fredholm mappings]\label{in-do}
	Let $ \varphi: M \to N  $ be an \mc-\li-Fredholm mapping of index zero, $ k>1 $.
	If  $ \varphi $ is locally injective, then $ \varphi $ is open.
\end{theorem}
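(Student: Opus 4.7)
My plan is to reduce the problem to a finite-dimensional statement via the local representation Theorem \ref{rp}, and then conclude by a classical Brouwer degree argument. Fix $x_0 \in M$ and choose an open neighborhood $U$ of $x_0$ on which $\varphi$ is injective; it suffices to exhibit an open neighborhood of $\varphi(x_0)$ contained in $\varphi(U)$. Passing to charts around $x_0$ and $\varphi(x_0)$, we may assume $U \opn E$, $\varphi : U \to F$ is $MC^k$-Lipschitz-Fredholm of index zero, and $\varphi(x_0) = 0_F$.

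Apply Theorem \ref{rp} at $x_0$ with the splittings $E = \bl{E_1} \oplus \bl{E_2}$ and $F = \bl{F_1} \oplus \bl{F_2}$, where $\bl{E_2} = \ker \dd\varphi(x_0)$ and $\bl{F_1} = \Img \dd\varphi(x_0)$; both splittings exist because finite-dimensional (and closed finite-codimensional) subspaces of \fr spaces are topologically complemented, via Hahn-Banach. Since $\Ind \varphi = 0$, we have $\dim \bl{E_2} = \dim \bl{F_2} = m < \infty$. After the $MC^k$-change of coordinates $\Psi$ produced by the theorem, $\varphi$ takes the normal form $(f,e) \mapsto (f, \eta(f,e))$ on an open set $V \opn \bl{F_1} \oplus \bl{E_2}$ containing $(0,0)$, with $\eta : V \to \bl{F_2}$ and $\eta(0,0)=0$. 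Local injectivity of $\varphi$ translates into the statement that, for each fixed $f$ close to $0$, the slice $e \mapsto \eta(f,e)$ is injective.

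Identify $\bl{E_2} \cong \bl{F_2} \cong \R^m$. Pick a small closed ball $B \subset \bl{E_2}$ about $0$ with $B$ contained in the domain of $\eta(0,\cdot)$; by the classical Brouwer invariance of domain applied to the injective continuous map $\eta(0,\cdot)|_B$, its image is a neighborhood of $0$, so there exists $\epsilon > 0$ with $\overline{B_\epsilon(0)} \subset \eta\bigl(0, \mathrm{int}\, B\bigr)$ and $\mathrm{dist}\bigl(0,\eta(0,\partial B)\bigr) \geq \epsilon$. Continuity of $\eta$ and compactness of $\partial B$ then yield a neighborhood $V_\delta$ of $0$ in $\bl{F_1}$ such that $p \notin \eta(f,\partial B)$ for every $(f,p) \in V_\delta \times \overline{B_{\epsilon/2}(0)}$. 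Homotopy invariance of the finite-dimensional Brouwer degree, combined with the fact that $\deg\bigl(\eta(0,\cdot), \mathrm{int}\, B, p\bigr) = \pm 1$ by injectivity, then forces $\deg\bigl(\eta(f,\cdot), \mathrm{int}\, B, p\bigr) = \pm 1 \neq 0$, so $p \in \eta\bigl(f, \mathrm{int}\, B\bigr)$ for every such $(f,p)$.

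Pulling back through $\Psi$ shows $V_\delta \times B_{\epsilon/2}(0) \subseteq \varphi(U)$, so $\varphi(x_0)$ is an interior point of $\varphi(U)$; applying the same argument at every point of $M$ proves $\varphi$ is open. I expect the main technical subtlety to be the uniform-in-$f$ transfer of the Brouwer degree, which relies on continuous dependence of $\eta$ on the \fr variable $f$ restricted to the compact sphere $\partial B \subset \R^m$; a secondary point is the verification of the split hypotheses of Theorem \ref{rp} for an index-zero Lipschitz-Fredholm operator in this \fr setting.
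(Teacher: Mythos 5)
Your proof is correct, but it takes a genuinely different route from the paper's. The paper argues globally with the machinery of its degree theory: it restricts $\varphi$ to the closure of a small connected neighborhood $\uu$, obtains properness from Theorem \ref{impo}, uses the nowhere-density of the singular set (Theorem \ref{injec}) to produce a regular value $y$ with $\varphi^{-1}(y)$ a singleton, and concludes with the Smale-type cobordism degree: $\deg\varphi=1$ forces $\varphi(\uu)$ to fill the connected component of $N\setminus\varphi(\partial\uu)$ containing $\varphi(p)$, hence $\varphi$ is open. You instead argue purely locally: Theorem \ref{rp} reduces everything to the finite-dimensional corrector $\eta(f,\cdot)\colon B\subset\bl{E_2}\to\bl{F_2}$ (equal dimensions by index zero), local injectivity of $\varphi$ is exactly slice injectivity of $\eta$, and classical Brouwer degree plus finite-dimensional invariance of domain finishes. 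Your route avoids the most delicate ingredients of the paper's argument --- properness/closedness of $\varphi\vert_{\overline{\uu}}$ via Theorem \ref{impo}, Sard's theorem, and the infinite-dimensional degree --- and needs neither non-constancy nor closedness of $\varphi$; in fact $k\geq1$ suffices for your argument. The paper's route, in exchange, showcases the degree it has just constructed and feeds directly into the Fredholm-alternative corollary. Two details would make your write-up airtight: (i) before invoking Theorem \ref{rp}, record why the splittings exist --- $\Img\dd\varphi(x_0)$ is closed because it is a finite-codimensional image of a continuous linear map (open mapping theorem), its finite-dimensional complement is closed, and the finite-dimensional kernel splits by Hahn--Banach --- and note that the target of $\eta$ should be read as the complement $\bl{F_2}$ of the image, with the ball $B$ centered at the $\bl{E_2}$-component of $\Psi^{-1}(x_0)$ rather than at $0$, since Theorem \ref{rp} does not normalize that point; (ii) in the homotopy step take the neighborhood $V_\delta$ of $0$ in $\bl{F_1}$ convex (possible since \f spaces are locally convex), so that $s\mapsto\eta(sf,\cdot)$ is an admissible homotopy keeping $p$ off $\eta(sf,\partial B)$ for all $s\in[0,1]$.
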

\begin{proof}
	Let $ p \in U \opn M $ and $ q= \varphi (p)$. 
	The point $ p $ has a connected open \nbd $\, \uu \opn M $ such that 
	$ \varphi \mid_{\overline{\uu}} : \overline{\uu} \to N $ is proper and injective. 
	Whence $ q \notin \varphi (\partial \uu) $ and $ \varphi (\partial \uu) $ is closed in $ N $.
	Let $ \vv $ be a connected component of $ N \setminus \varphi (\partial \uu)$
	containing $ q $ which is its open neighborhood. Since $ \uu $ is connected it implies that $ \varphi (\uu) \subset \vv $.
	It follows from $ \varphi (\partial \uu)  \cap \vv = \emptyset$ that $ \overline{\uu} \cap \varphi^{-1}(\vv) = \uu $
	and so $ \varphi \mid_{\uu} : \uu \to N $ is proper and injective.  
	By Theorem~\ref{injec} there is a point $ x \in M $ such that the tangent map 
	$ T_x\varphi $ is injective and since $ \Ind \varphi = 0 $ it is surjective too. 
	Therefor, $ y = \varphi(x) $ is a regular value with $ \varphi^{-1}(y) = \set{x} $
	and so $ \deg \varphi = 1 $. It follows that $ \varphi $ is surjective, because
	if it is not , then any point in $ N \setminus \varphi (M)$  is regular and $ \deg\varphi = 0 $
	which is contradiction. Then, $ \vv = \varphi (\uu) $ is the open \nbd of $ q $.   
\end{proof} 
\begin{cor}[Nonlinear Fredholm alternative]
	Let $ \varphi: M \to N  $ be an \mc-\li-Fredholm mapping of index zero, $ k>1 $.
	If $ N $ is connected and $ \varphi $ is locally injective, then $ \varphi $
	is surjective and finite covering mapping. If $ M $
	is connected and $ N $ is simply connected, then $ \varphi $ is a homeomorphism.
\end{cor}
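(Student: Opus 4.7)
The plan is to combine Theorem~\ref{in-do} with classical covering-space theory. First, I would note that local injectivity together with Theorem~\ref{in-do} (openness of $\varphi$) shows that $\varphi$ is a local homeomorphism: around each $p\in M$ there is a connected open neighborhood $\uu$ on which $\varphi$ is injective and maps $\uu$ homeomorphically onto the open set $\varphi(\uu)\opn N$.

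Next, I would promote $\varphi$ to a proper map. Since the degree-theoretic framework of this section is set up for closed non-constant mappings, Theorem~\ref{impo} gives that $\varphi$ is proper. A proper local homeomorphism has finite discrete fibers---discrete by local injectivity, compact (hence finite) by properness---and it is standard that such a map is a finite-sheeted covering onto its image. Combined with the fact that $\varphi(M)$ is open by Theorem~\ref{in-do} and closed by properness, and that $N$ is connected, we obtain $\varphi(M)=N$, proving surjectivity and the finite covering statement.

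For the second claim, I would invoke the standard covering-space fact that a connected covering of a simply connected base must be trivial: the identity $\id_N:N\to N$ lifts through the covering $\varphi$ to a continuous section $s:N\to M$ with $\varphi\circ s=\id_N$; connectedness of $M$ together with the covering property forces the image $s(N)$ to exhaust $M$, so $s$ is a two-sided inverse of $\varphi$ and $\varphi$ is a homeomorphism.

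The main obstacle is establishing global properness. The corollary as stated only names $\varphi$ as an \mc-\li-Fredholm mapping of index zero, so one must carry over the closedness assumption implicit in the degree-theoretic setup of this section in order to apply Theorem~\ref{impo}. Once properness is in hand, both conclusions reduce to routine applications of covering-space theory, with no further analytic subtleties to resolve.
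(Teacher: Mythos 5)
Your argument is correct and is essentially the intended derivation: the paper states this corollary without proof as an immediate consequence of Theorem~\ref{in-do}, and the route you take (local injectivity plus openness gives a local homeomorphism, Theorem~\ref{impo} gives properness, a proper local homeomorphism onto the connected $N$ is a finite-sheeted covering, and a connected covering of a simply connected, locally path-connected base is trivial) is the standard way to fill it in. The caveat you raise --- that closedness and non-constancy must be carried over from the degree-theoretic setup of the section in order to invoke Theorem~\ref{impo} --- is a genuine imprecision in the paper's own statement, and you handle it appropriately.
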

The following theorem is a generalization of the Bursuk-Ulam theorem, the proof is slight modification 
of the Banach case.  
\begin{theorem}
	Let $ \varphi : \overline{\uu} \to F $ be a non-constant closed \li-Fredholom mapping of class 
	$ MC^2 $ with index zero, where $ U \opn F  $ is  symmetric.
	If $ \varphi $ is odd and for $ u_0 \in \overline{U} $ we have $ u_0 \notin \varphi (\partial \overline{\uu}) $.
	Then $ \deg (\varphi, u_0) \equiv 1 \mod 2. $ 
\end{theorem}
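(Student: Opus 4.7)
The plan is to mimic the classical Borsuk--Ulam argument and adapt each step to the \mc-\li-Fredholm setting. Since $\uu$ is symmetric and $\varphi$ is odd, $\varphi(0)=0$, and the natural base point is $u_0=0\in\overline{\uu}$ with $0\notin\varphi(\partial\overline{\uu})$. By the construction of $\deg(\varphi,u_0)$ given in Section~3, the degree equals the mod-$2$ cardinality of $\varphi^{-1}(\mathbf{p})$ for any regular value $\mathbf{p}$ in the connected component of $F\setminus\varphi(\partial\overline{\uu})$ containing $0$. The strategy is to modify $\varphi$ within its class of odd $MC^2$-Lipschitz--Fredholm maps so that $0$ itself is a regular value, making the preimage manifestly symmetric.

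The first task is to produce an odd $MC^2$-Lipschitz--Fredholm approximation $\tilde\varphi$ of $\varphi$ for which $0$ is a regular value. Consider a one-parameter family $\varphi_t(x)=\varphi(x)+t\,w(x)$, where $w:\overline{\uu}\to F$ is an odd $MC^2$-map with finite-dimensional range rich enough to transversalise at $0$; if need be, $w$ is obtained as the odd part $\tfrac{1}{2}(h(x)-h(-x))$ of a suitable smooth finite-rank perturbation $h$. Finite-dimensionality of the range together with Theorem~\ref{open} ensures $\varphi_t$ remains Lipschitz--Fredholm of index $0$ for small $|t|$. Applying the Parametric Transversality Theorem~\ref{p} to $(t,x)\mapsto\varphi_t(x)$ with $S=\{0\}$ yields a residual, hence dense, set of parameters $t$ for which $\varphi_t\pitchfork\{0\}$, i.e.\ $0$ is a regular value. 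Oddness of $w$ makes $\varphi_t$ odd; choosing $t$ small enough keeps $0$ in the same component of $F\setminus\varphi_t(\partial\overline{\uu})$ as for $\varphi$, so that $\deg(\varphi,0)=\deg(\tilde\varphi,0)$ for $\tilde\varphi:=\varphi_t$.

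With $0$ a regular value of $\tilde\varphi$, Corollary~\ref{s1} together with Theorem~\ref{impo} gives that $\tilde\varphi^{-1}(0)\subset\overline{\uu}$ is a compact $0$-dimensional submanifold, i.e.\ a finite set. Oddness makes this set invariant under $x\mapsto-x$, and $\tilde\varphi(0)=0$ places the origin inside it. Every other preimage pairs off with its distinct antipode, hence $\#\tilde\varphi^{-1}(0)=1+2k$ is odd, yielding $\deg(\varphi,0)\equiv 1\pmod 2$.

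The main obstacle is the perturbation step: one must construct an odd finite-dimensional correction $w$ large enough to transversalise $\{0\}$, yet small in the metric~\eqref{metric} to preserve both the Lipschitz--Fredholm index-$0$ property and the component-stability of $0$ in $F\setminus\varphi_t(\partial\overline{\uu})$. Enforcing oddness globally in the \fr category, where partition-of-unity techniques from the Banach case are not directly available, is the most delicate point and the place where the argument genuinely departs from the Banach proof alluded to in the statement.
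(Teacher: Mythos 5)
Your overall plan coincides with the paper's: perturb $\varphi$ so that the target value becomes regular, then use oddness to pair off the (finite, compact) preimage, with the fixed point of the symmetry left unpaired. The genuine gap is in the step that produces the perturbation. Theorem~\ref{p} cannot be applied to $(t,x)\mapsto\varphi(x)+t\,w(x)$ with $S=\{0\}$: first, the theorem requires $S$ to have \emph{finite} codimension in the target, while $\{0\}$ has infinite codimension in the Fr\'echet space $F$; second, its conclusion has the wrong shape for your purpose, since the finite-dimensional manifold $A$ is the \emph{source} of the maps $\varphi_x\colon A\to N$ and the residual set is found in the other factor $M$, whereas you need a residual set of scalar parameters $t$ for maps defined on the infinite-dimensional domain $\overline{\uu}$; third, the theorem's hypothesis --- transversality of the joint map to $S$ --- is precisely what has to be proved, and a one-parameter family of the form $t\,w$ cannot deliver it in general: at a point $(t,x)$ with $\varphi_t(x)=0$ the parameter direction contributes only the scalar multiples of $w(x)$, so surjectivity of the joint differential would force $w(x)$ to span $\Coker\dd\varphi_t(x)$ at every such $x$ simultaneously, which fails as soon as some cokernel has dimension greater than one and is in any case a pointwise condition you have not arranged. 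You flag this construction as ``the most delicate point,'' but it is the actual content of the proof, not a technicality that can be deferred; as written, the transversalization step would fail.

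The paper takes a different and workable route at exactly this point: the parameter space is a set $\mathsf{C}$ of globally Lipschitz \emph{compact linear} operators $\phi$ with $\mathpzc{Lip}(\phi)<\mathbf{b}$, where $\mathbf{b}$ is small against $\mathbf{a}=\varrho(\varphi(\partial\uu),u_0)>0$, so that $u_0\notin\Phi_\phi(\partial\uu)$ for $\Phi_\phi=\varphi+\phi$ and the proper homotopy $(t,u)\mapsto\varphi(u)+t\phi(u)$ gives $\deg(\varphi,u_0)=\deg(\Phi_\phi,u_0)$. The family contains an operator $\widehat\phi$ restricting to an isomorphism of $\ker\dd\varphi(u_0)$ onto the finite-dimensional complement $\mathbf{F}_2$ of $\Img\dd\varphi(u_0)$, which makes the evaluation map $\Psi(u,\phi)=\varphi(u)+\phi(u)$ submersive at the relevant points; then $\Psi^{-1}(u_0)$ is a submanifold, the projection $\Pi\colon\Psi^{-1}(u_0)\to\mathsf{C}$ is Lipschitz-Fredholm of index zero, and a Sard argument applied to $\Pi$ (the mechanism inside the proof of Theorem~\ref{p}, not its statement with $S=\{0\}$) produces a parameter for which $u_0$ is a regular value; only then does the odd-pairing count give the parity. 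Note also that linear perturbations are automatically odd, which dissolves the difficulty you raise about enforcing oddness of a nonlinear finite-rank correction; and your sketch silently replaces $u_0$ by $0$, whereas the statement allows any $u_0\notin\varphi(\partial\overline{\uu})$, so you should either reduce explicitly to $u_0=0$ or explain why the symmetric count of $\varphi^{-1}(u_0)$ applies to general $u_0$.
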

\begin{proof}
	Since $\dd \varphi (u_0) $ is a \li-Fredholm mapping with index zero $$ F = \mathbf{F_1} \oplus \ker \varphi = \mathbf{F_2} \oplus \Img \varphi$$ and $ \dim \mathbf{F_2} = \dim \ker \varphi$. The image $ \varphi(\partial \uu) $ is closed as $ \varphi $
is closed, hence $$ \mathbf{a} = \varrho ( \varphi (\uu), u_0) > 0 $$ because $ u_0 \notin \varphi (\partial \uu) $.

Let $ \phi : F \to F$ be a global \li-compact linear operator with  $\mathpzc{Lip}(\phi) < \mathbf{b}$ for some
$ \mathbf{b}>0 $. Define the mapping 
$ \Phi_{\phi} : \overline{U} \to F$ by 
$\Phi_{\phi}(u) = \varphi (u) + \phi (u)$.
Then $ \Phi_{\phi}$ is a \li-Fredholm mapping of index zero. Suppose $ \mathbf{b} < \mathbf{a} /\mathbf{k} $ for some $ \mathbf{k} > 1$, then
\begin{equation*}\label{key}
\varrho (\Phi_{\phi} (u), u_0) \geq \varrho (\varphi (e), u_0) - \mathpzc{Lip}(\phi) \varrho (U,u_0) > \mathbf{a} - \mathbf{bk} > 0,  \quad \forall u \in \partial \uu.
\end{equation*}
Therefore, $ u_0  \notin \Phi_{\phi} (\partial \uu)$. We obtain $ \deg (\varphi, u_0) = \deg (\Phi_{\phi}, u_0)$ 
as the mapping $$ \psi: [0,1] \times \overline{U} \to F $$ defined by $ (t,u) \to \varphi (u) +t \phi(u) $
is proper and $ u_0 \notin \psi (\partial U) $ for all $ t $. Considering the fact that
$ \psi (-u) = -\psi (u) $, we may use the perturbation by compact operators to find the degree of $ \varphi $.
Let $ \mathsf{C} $   be a set of global \li-compact linear operators  $ \phi: F \to F $  with  
$\mathpzc{Lip}(\phi) < \mathbf{b}  < \mathbf{a} /\mathbf{k}$. Let $ \widehat{\phi} \in \mathsf{C}$ be such that its restriction to 
$ \mathbf{F_1} $ equals $ u_0 $  and $ \widehat{\phi} \mid_{\ker \dd \varphi(u_0)} : \ker \dd \varphi(u_0) \to \mathbf{F_2} $ is an $ MC^1 $-isomorphism.
Therefore, $ \dd \varphi (u_0) + \widehat{\phi}$ and consequently $ \dd \varphi(u_0) $ is an $ MC^1 $-isomorphism. 
Now define the mapping $ \Psi : \uu \times \mathsf{C} \to F $ by $ (u,\phi) = \Phi_{\phi}(u) $. For sufficiently small
$ \mathbf{b} $ the differential $ \dd\Psi(u,\phi)(v,\psi) = (\dd \varphi (u) + \phi)v+ \psi(u)$ is surjective at $ u_0 $ as $ \dd \varphi (u_0) $ is an $ MC^1 $-isomorphism. Also, it is clear that it is surjective at the other points.
Then, the mapping $ \Psi $ satisfies the assumption of Theorem~\ref{p}, therefore, $ \Psi^{-1}(u_0) $
is a submanifold and the mapping $ \Pi : \Psi^{-1}(u_0) \to \mathsf{C} $ induced by
the projection onto the second order is \li-Fredholm of index zero. By employing the local version of Sard's theorem
we may find a regular point $ \overline{\phi} $ of $ \Pi $, and from the proof of the Theorem \ref{p} it follows that
$ u_0 $ is a regular value of $ \Phi_{\overline{\phi}} $ and consequently $ u_0 $ is a regular value of $ \varphi $.
Thus, properness and $ \varphi(-u)=-\varphi(u) $ imply that $  \varphi^{-1}(u_0) = \set{u_0, f_1,-f_1, \cdots f_m,-f_m }  $ and therefore 
$ \deg (\varphi, u_0) \equiv 1 \mod 2 $.

\end{proof}
\bibliographystyle{amsplain}

\end{document}